\newcommand{\dist}{{\mbox{dist}}}
\newcommand{\NP}{{\sf NP}}
\newcommand{\PSPACE}{{\sf PSPACE}}
\newcommand{\NEXPTIME}{{\sf NEXPTIME}}
\newcommand{\cP}{{\sf P}}
\newcommand{\ssi}{\subseteq_i}
\newtheorem{oproblem}{Open Problem}
\title{Colouring Generalized Claw-Free Graphs and Graphs of Large Girth: Bounding the Diameter \footnote{Some of the results in this paper appeared in an extended abstract in the proceedings of MFCS 2019~\cite{DPS19}.}} 
\titlerunning{Colouring Generalized Claw-Free Graphs and Graphs of Large Girth}
\author{Barnaby Martin}{Department of Computer Science, Durham University, United Kingdom}{barnaby.d.martin@durham.ac.uk}{}{}
\author{Dani\"el Paulusma}{Department of Computer Science, Durham University, United Kingdom}{daniel.paulusma@durham.ac.uk}{0000-0001-5945-9287}{supported by the Leverhulme Trust (RPG-2016-258).}
\author{Siani Smith}{Department of Computer Science, Durham University, United Kingdom}{siani.smith@durham.ac.uk}{}{}
\authorrunning{B. Martin, D. Paulusma and S. Smith }
\keywords{colouring, $H$-free, diameter}
\begin{document}

\maketitle

\begin{abstract}
For a fixed integer, the {\sc $k$-Colouring} problem is to decide if the vertices of a graph can be coloured with at most $k$ colours for an integer~$k$, such that no two adjacent vertices are coloured alike. A graph~$G$ is $H$-free if $G$ does not contain $H$ as an induced subgraph.  It is known that for all $k\geq 3$, the {\sc $k$-Colouring} problem is \NP-complete for $H$-free graphs if $H$ contains an induced claw or cycle. The case where $H$ contains a cycle follows from the known result that the problem is \NP-complete even for graphs of arbitrarily large fixed girth. We examine to what extent the situation may change if in addition the input graph has bounded diameter. 
\end{abstract}

\keywords{vertex colouring, $H$-free graph, diameter}

\section{Introduction}\label{s-intro}

Graph colouring is one of the best studied concepts in Computer Science and Mathematics. This is mainly due to its many practical and theoretical applications and its many natural variants and generalizations. Over the years, numerous surveys and books on graph colouring were published (see, for example,~\cite{Al93,C14,JT95,KTV99,Pa15,RS04b,Tu97}). 

A {\em (vertex) colouring} of a graph $G=(V,E)$ is a mapping $c: V\rightarrow\{1,2,\ldots \}$ that assigns each vertex~$u\in V$ a {\it colour} $c(u)$ in such a way that $c(u)\neq c(v)$ whenever $uv\in E$. If $1\leq c(u)\leq k$, then $c$ is said to be a {\it $k$-colouring} of $G$ and $G$ is said to be $k$-{\it colourable}. The {\sc Colouring} problem is to decide if a given graph $G$ has a $k$-colouring for some given integer~$k$. If $k$ is {\it fixed}, that is, $k$ is not part of the input, we denote the problem by $k$-{\sc Colouring}.
It is well known that even {\sc $3$-Colouring} is \NP-complete~\cite{Lo73}.

In this paper we aim to increase our understanding of the computational hardness of {\sc Colouring}. One way to do this is to consider inputs from families of graphs to learn more about the kind of graph structure that causes the hardness. This led to a highly extensive study of {\sc Colouring} and {\sc $k$-Colouring} for many special graph classes. The best-known result in this direction is due to  Gr\"otschel, Lov\'asz, and Schrijver, who proved that {\sc Colouring} is polynomial-time solvable for perfect graphs~\cite{GLS84}.

Perfect graphs form an example of a graph class that is closed under vertex deletion. Such graph classes are also called {\it hereditary}.
Hereditary graph classes are ideally suited for a {\it systematic} study in the computational complexity of graph problems. 
Not only do they capture a very large collection of  many well-studied graph~classes, but they are also exactly the graph classes that can be characterized by a unique set ${\cal H}$ of minimal forbidden induced subgraphs. When solving an \NP-hard problem under input restrictions, it is standard practice to consider, for example, first the case where ${\cal H}$ has small size, or where each $H\in {\cal H}$ has small size. 

We note that the set ${\cal H}$ defined above may be infinite. For example, the class of bipartite graphs is hereditary and for this class the set ${\cal H}$ consists of all odd cycles.
If ${\cal H}=\{H_1,\ldots,H_p\}$ for some positive integer $p$,  then the corresponding hereditary graph class ${\cal G}$ is said to be {\it finitely defined}.
Formally, a graph $G$ is {\it $(H_1,\ldots,H_p)$-free} if for each $i\in \{1,\ldots,p\}$, $G$ is {\it $H_i$-free}, where the latter means that $G$ does not contain an induced subgraph isomorphic to~$H_i$.

We emphasize that the borderline between \NP-hardness and tractability is often far from clear beforehand and jumps in computational complexity can be extreme. In order to illustrate this behaviour of graph problems, we presented in~\cite{MPS} examples of \NP-hard, \PSPACE-complete and 
\NEXPTIME-complete problems that become even constant-time solvable for every hereditary graph class that is not equal to the class of all graphs.

In this paper, we consider the problems {\sc Colouring} and {\sc $k$-Colouring}. In order to describe known results and our new results we first give some terminology and notation.

\subsection{Terminology and Notation}\label{s-term}
The {\it disjoint union} of two vertex-disjoint graphs $F$ and $G$ is the graph $G+F=(V(F)\cup V(G),E(F)\cup E(G))$.
The disjoint union of~$s$ copies of a graph~$G$ is denoted~$sG$. A {\it linear forest} is the disjoint union of paths.
The {\it length} of a path or a cycle is the number of its edges. The {\it distance} $\dist(u,v)$ between two vertices $u,v$ in a graph~$G$ is the length of a shortest induced path between them. The {\it diameter} of a graph~$G$ is the maximum distance over all pairs of vertices in $G$. The diameter of a disconnected graph is $\infty$.
 The {\it girth} of a graph~$G$ is the length of a shortest induced cycle of~$G$. The girth of a forest is $\infty$.
The graphs~$C_r$, $P_r$ and~$K_r$ denote the cycle, path and complete graph on~$r$ vertices, respectively.

 \begin{figure}
\begin{tikzpicture}[scale=1]
\draw (0,1)--(-1,1)--(-2,0)--(-1,-1)--(2,-1) (-2,0)--(1,0); \draw[fill=black] (-1,1) circle [radius=2pt] (0,1) circle [radius=2pt] (-2,0) circle [radius=2pt] (-1,0) circle [radius=2pt] (0,0) circle [radius=2pt] (1,0) circle [radius=2pt] (-1,-1) circle [radius=2pt] (0,-1) circle [radius=2pt] (1,-1) circle [radius=2pt] (2,-1) circle [radius=2pt];
\end{tikzpicture}
\caption{The polyad $S_{2,3,4}$.}\label{f-st}
\vspace*{-0.4cm}
\end{figure}
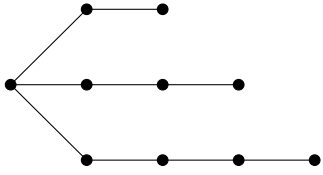

A {\it polyad} is a tree where exactly one vertex has degree at least~$3$. We will use the following special polyads in our paper.
For $r\geq 1$, the graph $K_{1,r}$ denotes the $(r+1)$-vertex {\it star}, that is, the graph with vertices $x,y_1,\ldots,y_r$ and edges $xy_i$ for $i=1,\ldots,r$; here $x$ is called the {\it centre} vertex. The graph $K_{1,3}$ is also called the {\it claw}. The {\it subdivision} of an edge $uw$ in a graph removes $uw$ and replaces it with a new vertex $v$ and edges $uv$, $vw$. 
For $\ell\geq 1$, the graph $K_{1,r}^\ell$ denotes the {\it $\ell$-subdivided star}, which is 
the  graph obtained from a star $K_{1,r}$ by subdividing {\it one} edge of $K_{1,r}$ exactly $\ell$ times.
The graph~$S_{h,i,j}$, for $1\leq h\leq i\leq j$, denotes the {\it subdivided claw}, which is the tree with one vertex~$x$ of degree~$3$ and exactly three leaves, which are of distance~$h$,~$i$ and~$j$ from~$x$, respectively; see Figure~\ref{f-st} for an example.
Note that $S_{1,1,1}=K_{1,3}$. The graph $S_{1,1,2}=K_{1,3}^1$ is also known as the {\it chair}.

A graph $G$ is {\it locally claw-free} if the neighbourhood of every vertex of $G$ induces a claw-free graph.
A graph~$G$ is {\it quasi-claw-free} if every two vertices $u$ and $v$ that are of distance~$2$ from each other have a common neighbour $w$
such that every neighbour of $w$ not in $\{x,y\}$ is adjacent to at least one of $x,y$. 

For a graph $G=(V,E)$, a set $A\subseteq V$ dominates a set $B\subseteq V$ if every vertex of $B$ is either in $A$ or adjacent to a vertex of $A$.
A graph $G=(V,E)$ is {\em almost claw-free}  if the following two
conditions hold:\\[-8pt]
\begin{enumerate}
\item the set consisting of all the centres of induced claws in $G$ is an independent set, and
\item for every $u\in V$, $N(u)$ contains a set of size at most $2$ that dominates $N(u)$.
\end{enumerate}

\noindent
Note that claw-free graphs are locally claw-free, quasi-claw-free and almost claw-free. Hence, the latter three graph classes all generalize the class of claw-free graphs. 
Note also that if all the centres of the induced claws in a graph $G$ form an independent set, then $G$ is locally claw-free.
Hence, as observed by Ryj{\'{a}}cek~\cite{Ry94}, every almost claw-free graph is locally claw-free. Ainouche~\cite{Ai98} showed that both the class of almost claw-free graphs and the class of locally claw-free graphs are incomparable to the class of quasi-claw-free graphs; see also Figure~\ref{f-ai}.

We generalize almost claw-free graphs as follows. 
For an integer~$r\geq 3$, a graph $G=(V,E)$ is {\em almost $K_{1,r}$-free}  if the following two
conditions hold:\\[-8pt]
\begin{enumerate}
\item the set consisting of all the centres of induced $K_{1,r}$s in $G$ is an independent set, and
\item for every $u\in V$, $N(u)$ contains a set of size at most $r-1$ that dominates $N(u)$ .
\end{enumerate}

\begin{center}
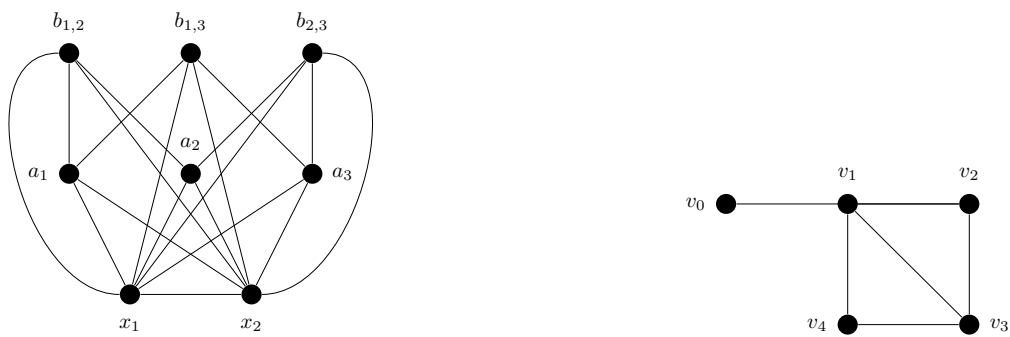
\begin{figure}
\scalebox{.8}{
\begin{tikzpicture}[main_node/.style={circle,draw,minimum size=0.75cm, inner sep=2pt]}]
\node[circle, fill=black](v1) at (0,0){};
\node(v1l) at (-0.5,0){$a_1$};
\node(v2l) at (2,0.5){$a_2$};
\node(v3l) at (4.5,0){$a_3$};
\node(v4l) at (1,-2.5){$x_1$};
\node(v5l) at (3,-2.5){$x_2$};
\node(v6l) at (0,2.5){$b_{1,2}$};
\node(v7l) at (2,2.5){$b_{1,3}$};
\node(v8l) at (4,2.5){$b_{2,3}$};
\node[circle, fill=black](v2) at (2,0){};
\node[circle, fill=black](v3) at (4,0){};
\node[circle, fill=black](v4) at (1,-2){};
\node[circle, fill=black](v5) at (3,-2){};
\node[circle, fill=black](v6) at (0,2){};
\node[circle, fill=black](v7) at (2,2){};
\node[circle, fill=black](v8) at (4,2){};
\draw(v4)--(v2);
\draw(v5)--(v4);
\draw(v4)--(v1);
\draw(v4)--(v3);
\draw(v5)--(v1);
\draw(v5)--(v3);
\draw(v1)--(v6);
\draw(v1)--(v7);
\draw(v2)--(v6);
\draw(v2)--(v8);
\draw(v3)--(v7);
\draw(v3)--(v8);
\draw(v5)--(v2);
\draw(v4) to [out=180, in=180] (v6);
\draw(v4)--(v7);
\draw(v4)--(v8);
\draw(v5) to [out=0, in=0](v8);
\draw(v5)--(v7);
\draw(v5)--(v6);
\end{tikzpicture}
\hspace*{4cm}
\begin{tikzpicture}[main_node/.style={circle,draw,minimum size=0.55cm, inner sep=2pt]}]
\node(v0l) at (-2.5,0){$v_0$};
\node(v4l) at (-0.5,-2){$v_4$};
\node(v1l) at (0,0.5){$v_1$};
\node(v2l) at (2,0.5){$v_2$};
\node(v3l) at (2.5,-2){$v_3$};
\node[circle, fill=black](v0) at (0,0){};
\node[circle, fill=black](v1) at (2,0){};
\node[circle, fill=black](v2) at (0,-2){};
\node[circle, fill=black](v3) at (2,-2){};
\node[circle, fill=black](v4) at (-2,0){};
\draw(v0)--(v1);
\draw(v0)--(v2);
\draw(v0)--(v3);
\draw(v1)--(v4);
\draw(v1)--(v3);
\draw(v2)--(v3);
\end{tikzpicture}}
\caption{{\it Left:} a quasi-claw-free graph that is not locally claw-free and hence not almost claw-free. 
The fact that this graph is quasi-claw-free follows from analysing the pairs of vertices of distance~$2$ from each other: 
for  any two $a$-type vertices, take their common $b$-type neighbour to satisfy the definition; for an $a$-type vertex and its unique non-neighbour of $b$-type: take $x_1$ (or $x_2$); and for two $b$-type vertices: take their common $a$-type neighbour.
The fact that the graph is not locally claw-free can be seen by considering, for example, the neighbourhood of $x_1$.
{\it Right:} an almost claw-free and thus locally claw-free graph that is not quasi-claw-free. The fact that the graph is almost claw-free can be readily checked. The fact that the graph is not quasi-claw-free can be seen by considering the vertices $v_0$ and $v_4$, which only have $v_1$ as a  common neighbour, while neither $v_0$ nor $v_4$ is adjacent to neighbour $v_2$ of $v_1$.
The second example was given by Ainouche~\cite{Ai98}.}\label{f-ai}
\end{figure}
\end{center}

\subsection{Known Results}\label{s-known}
The computational complexity of {\sc Colouring} has been fully classified for $H$-free graphs: if $H$ is an induced subgraph of $P_1+P_3$ or of $P_4$, then {\sc Colouring} for $H$-free graphs is polynomial-time solvable, and otherwise it is \NP-complete~\cite{KKTW01}.
In contrast, the complexity classification for $k$-{\sc Colouring} restricted to $H$-free graphs is still incomplete.
It is known that for every $k\geq 3$, {\sc $k$-Colouring} for $H$-free graphs is \NP-complete if $H$ contains a cycle~\cite{EHK98} or an induced claw~\cite{Ho81,LG83}. However, the remaining case where $H$ is a linear forest has not been settled yet even if $H$ consists of a single path. For $P_t$-free graphs, the cases $k\leq 2$, $t\geq 1$ (trivial), $k\geq 3$, $t\leq 5$~\cite{HKLSS10}, $k=3$, $6\leq t\leq 7$~\cite{BCMSSZ18} and $k=4$, $t=6$~\cite{CSZ19} are polynomial-time solvable and the cases  $k=4$, $t\geq 7$~\cite{Hu16} and  $k\geq 5$, $t\geq 6$~\cite{Hu16} are \NP-complete. The cases where $k=3$ and $t\geq 8$ are still open. For further details, including for linear forests $H$ of more than one connected component, see the survey paper~\cite{GJPS17}  or some recent papers~\cite{CHSZ18,GOPSSS18,HLS,KMMNPS18}.

Recently, Pilipczuk, Pilipczuk and Rz{\k{a}}\.{z}ewski~\cite{PPR21} gave for every $t\geq 3$,
a quasi-polynomial-time algorithm for {\sc $3$-Colouring} on $P_t$-free graphs. 
Rojas and Stein~\cite{RS} proved that for every odd integer $t\geq 9$, {\sc $3$-Colouring} is polynomial-time solvable for 
$({\cal C}^{odd}_{<t-3},P_t)$-free graphs, where ${\cal C}^{odd}_{<t}$ is the set of all odd cycles on less than $t$ vertices.
This complements a result from~\cite{GPS14} which implies that for every $t\geq 1$, {\sc $3$-Colouring}, or its generalization {\sc List $3$-Colouring}\footnote{See Section~\ref{s-pre} for a definition of the {\sc List $k$-Colouring} problem.}, is polynomial-time solvable for $(C_4,P_t)$-free graphs (see also~\cite{LM17}).

Emden-Weinert, Hougardy and Kreuter~\cite{EHK98} proved
that for all integers $k\geq 3$ and $g\geq 3$, {\sc $k$-Colouring}  is \NP-complete for graphs with girth at least~$g$ and with maximum degree at most~$6k^{13}$ (for more results on {\sc Colouring} for graphs of maximum degree, see~\cite{CC06,DDJP,MR14}). 

\subsection{Our Focus}\label{s-focus}
Our starting point is to look at $H$-free graphs where $H$ contains an induced claw or cycle. In this case, {\sc $k$-Colouring} restricted to $H$-free graphs is \NP-compete for every $k\geq 3$, as mentioned above. However, we re-examine the situation after adding a diameter constraint to our input graphs. If the diameter is~$1$, then $G$ is a complete graph, and {\sc Colouring} becomes trivial. As such, our underlying research question is:

\medskip
\noindent
{\it To what extent does bounding the diameter help making {\sc Colouring} and {\sc $k$-Colouring} tractable?}

\medskip
\noindent
We remark that the subclass of a hereditary graph class that consists of all graphs of diameter at most~$d$ for some constant $d$ may not be hereditary. In order to see this, consider for example the (hereditary) class~${\cal G}$ of graphs of maximum degree at most~$2$ and take its subclass~${\cal G}'$ of graphs of diameter at most~$2$. Then $P_3\in {\cal G}'$ but $2P_1\notin {\cal G}'$.
This fact requires some care in the proof of our results. 

We also note that by a straightforward reduction from {\sc 3-Colouring} one can show that $k$-{\sc Colouring} is \NP-complete for graphs of diameter $d$ for all pairs $(k,d)$ with $k\geq 3$ and $d\geq 2$ except for two cases, namely $(k,d)\in \{(3,2),(3,3)\}$. 
Mertzios and Spirakis~\cite{MS16} settled the case $(k,d)=(3,3)$ by proving that {\sc $3$-Colouring} is \NP-complete even for $C_3$-free graphs of diameter~3. The case $(k,d)=(3,2)$ is still open
(see also~\cite{BKM12,BFGP13,DPR21,MPS21,MS16,Pa15}).

In~\cite{MPS21}, we gave polynomial-time algorithms for the more general problem {\sc List $3$-Colouring} for classes of diameter-$2$ graphs that  in addition are $C_s$-free $(s\in \{5,6\})$ or $(C_4,C_t)$-free $(t\in 3,7,8,9\}$. In the same paper we also proved that for every integer $t\geq 8$, the {\sc $3$-Colouring} problem is \NP-complete on the class of $(C_4,C_6,C_7,\ldots,C_t)$-free graphs of diameter~$4$. 
We refer to~\cite{BGMPS21b,BGMPS21} for results on graph problems closely related to {\sc $3$-Colouring} restricted to graph classes of bounded diameter. These problems include {\sc Near-Bipartiteness}, {\sc Independent Feedback Vertex Set}, {\sc Independent Odd Cycle Transversal}, {\sc Acyclic $3$-Colouring} and {\sc Star $3$-Colouring}.

\subsection{Our Results}\label{s-ours}

We complement the bounded diameter results of Mertzios and Spirakis~\cite{MS16} and Martin et al.~\cite{MPS21} by presenting a set of new colouring results for generalized claw-free graphs and graphs of large girth whose diameter is bounded by a constant.

First, in Section~\ref{s-cycle}, we consider graphs of bounded diameter and girth.
We provide new polynomial-time and \NP-hardness results for {\sc Colouring} and {\sc List Colouring}, identifying and narrowing the gap between tractability and intractability, in particular we consider $3$-{\sc Colouring}  (see also Table~\ref{t-table}).

Second, in Section~\ref{s-polyad}, we research the effect on bounding the diameter of {\sc $k$-Colouring} and {\sc Colouring} restricted to graph classes that generalize the class of claw-free graphs. Our polynomial-time results for {\sc $k$-Colouring} hold in fact for {\sc List $k$-Colouring}. In particular, we prove that for all integers $d,k,r\geq 1$, {\sc List $k$-Colouring} is constant-time solvable for almost $K_{1,r}$-free graphs of diameter at most~$d$.
This result forms the starting point of our investigation in this section. We will show that it cannot be generalized to {\sc Colouring} (when $k$ is part of the input). As such we fix the number of colours~$k$ and consider quasi-claw-free graphs, almost $K_{1,r}$-free graphs, locally claw-free graphs and polyad-free graphs for various larger polyads. This leads to a number of new polynomial-time and \NP-complete results for $k$-{\sc Colouring}. For our results on polyads, we also refer to Table~\ref{fig:triad-summary}. 

By working in a systematic way, our results in Sections~\ref{s-cycle} and~\ref{s-polyad} exposed a number of natural open problems. In Section~\ref{s-con} we discuss directions for future work and summarize these questions.

\begin{center}
\begin{table}
\begin{tabular}{ |c|c|c|c|c|c|c|c|c|c|c|} 
 \hline
\backslashbox{diameter}{girth} & $\geq 3$ & $\geq 4$ & $\geq 5$ & $\geq 6$ & $\geq 7$ & $\geq 8$ & $\geq 9$ &$\geq 10$ &$\geq 11$ &$\geq 12$   \\ 
 \hline
 $\leq 1$ & \cP & \cP & \cP & \cP & \cP & \cP & \cP & \cP& \cP& \cP   \\
 \hline
 $\leq 2$ &  ? & ? & \cP &\cP & \cP & \cP & \cP & \cP& \cP& \cP   \\
 \hline
 $\leq 3$ & \NP-c & \NP-c& ? & ? & \cP & \cP& \cP& \cP& \cP& \cP \\
 \hline
 $\leq 4$ & \NP-c & \NP-c & \NP-c & \NP-c & ? & ? & \cP& \cP& \cP& \cP \\
 \hline
 $\leq 5$ & \NP-c & \NP-c & \NP-c & \NP-c & ? & ? & ?& ?& 
 \cP& 
 \cP \\
\hline
\end{tabular}
\vspace*{3mm}
\caption{The  complexity of $3$-{\sc{Colouring}} for graphs of diameter at most~$d$ and girth at least~$g$
for fixed values of $d$ and $g$. Here, \cP, \NP-c and ? represent polynomial, \NP-complete and open cases, respectively. 
The results in this table are generalized to larger values of $d$ (see Theorem~\ref{t-main2}).
}\label{t-table}
\end{table}
\end{center}

\section{Preliminaries}\label{s-pre}

In this section we complement Section~\ref{s-term} by giving some additional terminology and notation.
We also recall some useful results from the literature.

Let $G=(V,E)$ be a graph. A vertex $u\in V$ is {\it dominating} if $u$ is adjacent to every other vertex of $G$. 
For a set $S\subseteq V$, the graph $G[S]=(S,\{uv\; |\; u,v\in S\; \mbox{and}\; uv\in E\})$ denotes the subgraph of $G$ induced by $S$.  
The {\it neighbourhood} of a vertex $u\in V$ is the set $N(u)=\{v\; |\; uv\in E\}$ and the {\it degree} of $u$ is the size of $N(u)$.
For a set $U\subseteq V$, we write $N(U)=\bigcup_{u\in U}N(u)\setminus U$.
For a set $U\subseteq V$ and a vertex $u\in U$, the {\it private neighbourhood} of $u$ with respect to $U$ is the set
$N(u)\setminus (N(U\setminus \{u\})\cup U)$ of {\it private neighbours} of $u$ with respect to $U$, 
which is the set of neighbours of $u$ outside $U$ that are not a neighbour of any other vertex of $U$. 
If every vertex of $G$ has degree~$p$, then $G$ is {\it ($p$)-regular}.

\begin{table}
\begin{center}
\begin{tabular}{|c|c|c|c|c|} 
 \hline
Colours & Diameter & $H$-free & Complexity & Theorem \\ 
\hline
fixed $k$ & $d$ & $K_{1,r}$ & P & \ref{t-constant} \\
\hline
input $k$ & $d$ & $K_{1,4}$ & NP-c & \ref{t-col} \\
\hline
$3$ & $d$ & $K_{1,3}^1$ & P & \hspace*{4mm}\ref{s112}(1) \\
\hline 
$3$ & $2$ & $K_{1,r}^2$ & P &  \hspace*{4mm}\ref{s112}(2)\\
\hline 
$3$ & $4$ & $K_{1,4}^3$ & \NP-c &  \hspace*{4mm}\ref{s112}(3) \\
\hline
$4$ & $2$ & $K_{1,3}^1$ & \NP-c &  \hspace*{4mm}\ref{s112}(4) \\
\hline 
$3$ & $2$ & $S_{1,2,2}$ & P & \ref{s122} \\
\hline
\end{tabular}
\end{center}
\vspace*{3mm}
\caption{Our polynomial-time (\cP) and \NP-complete (\NP-c) results for polyad-free graphs.}
\label{fig:triad-summary}
\end{table}

The {\it diamond} is the graph obtained from the $K_4$ after removing an edge. The {\it bull} is the graph obtained from a triangle on vertices $x,y,z$ after adding two new vertices $u$ and $v$ and edges $xu$ and $yv$.

A {\it clique} in a graph is a set of pairwise adjacent vertices, and an {\it independent set} is a set of pairwise non-adjacent vertices.
By Ramsey's Theorem~\cite{Ra30},  there exists a constant, which we denote by $R(k,r)$, such that any graph on at least $R(k,r)$ vertices contains either a clique of size $k$ or an independent set of size $r$. 
A cycle is {\it odd} if it has odd length.

We will use the aforementioned results of Kr\'al' et al.; Holyer; Leven and Galil; Emden-Weinert, Hougardy and Kreuter; and Mertzios and Spirakis. If a graph $H$ is an induced subgraph of a graph $H'$, then we use $H\ssi H'$ to denote this.
 
\begin{theorem}[\cite{KKTW01}]\label{t-dichotomy}
Let $H$ be a graph. If $H\ssi  P_4$ or $H\ssi P_1+ P_3$, then
{\sc Colouring} restricted to $H$-free graphs is polynomial-time solvable, otherwise it is \NP-complete. 
\end{theorem}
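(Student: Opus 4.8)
The plan is to prove the two directions separately, using throughout the elementary fact that if $H'\ssi H$ then every $H'$-free graph is $H$-free. Consequently a polynomial-time algorithm for {\sc Colouring} on $F$-free graphs is inherited by $H$-free graphs for every $H\ssi F$, and an \NP-hardness proof for $H'$-free graphs is inherited by $H$-free graphs for every $H$ with $H'\ssi H$. So for the tractable side it suffices to treat $F=P_4$ and $F=P_1+P_3$, and for the hard side (where membership in \NP is trivial) it suffices to exhibit, for each $H$ that is an induced subgraph of neither $P_4$ nor $P_1+P_3$, a fixed graph $H'\ssi H$ for which {\sc Colouring} on $H'$-free graphs is \NP-hard.

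For $F=P_4$: a $P_4$-free graph is a cograph, and every cograph is a single vertex, the disjoint union of two smaller cographs, or the join of two smaller cographs; since $\chi(F_1+F_2)=\max\{\chi(F_1),\chi(F_2)\}$ and $\chi(F_1\oplus F_2)=\chi(F_1)+\chi(F_2)$, one computes $\chi$ bottom-up along the cotree in polynomial time. For $F=P_1+P_3$ I would pass to complements: $G$ is $(P_1+P_3)$-free exactly when $\overline{G}$ is paw-free (the paw, a triangle plus a pendant edge, being $\overline{P_1+P_3}$), and by the structure theorem of Olariu every connected paw-free graph is triangle-free or complete multipartite. If $G$ is disconnected, colour each component and take the maximum; if $\overline{G}$ is disconnected then $G$ is a join and one adds the chromatic numbers of the join factors; both reductions strictly shrink the graph and preserve $(P_1+P_3)$-freeness, so we may assume that both $G$ and $\overline{G}$ are connected. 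Then $\overline{G}$ is connected and paw-free; since the only connected complete multipartite graph with a connected complement is $K_1$, either $G=K_1$ or $\overline{G}$ is triangle-free, i.e.\ $\alpha(G)\le 2$. In the latter case a proper colouring of $G$ is precisely a partition of $V(\overline{G})$ into cliques of size at most $2$, so $\chi(G)=|V(G)|-\nu(\overline{G})$, where $\nu$ is the maximum matching number, computable by Edmonds' algorithm. This handles $F=P_4$ and $F=P_1+P_3$, and with the inheritance remark all $H$ with $H\ssi P_4$ or $H\ssi P_1+P_3$.

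For the hard side, first suppose $H$ is not a linear forest. Then $H$ contains an induced cycle, or a vertex of degree at least $3$ whose three neighbours either are pairwise non-adjacent, giving an induced claw $K_{1,3}$, or contain an adjacent pair, giving an induced triangle; in either situation $H$ has an induced $C_r$ for some $r\ge 3$ or an induced $K_{1,3}$. If $C_r\ssi H$, then the $H$-free graphs contain all graphs of girth greater than $r$, for which {\sc $3$-Colouring}, and hence {\sc Colouring}, is \NP-complete~\cite{EHK98}. If $K_{1,3}\ssi H$, then the $H$-free graphs contain all line graphs, for which {\sc Colouring} is \NP-complete since it coincides with {\sc Chromatic Index}~\cite{Ho81,LG83}. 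Now suppose $H$ is a linear forest that is an induced subgraph of neither $P_4$ nor $P_1+P_3$. Inspecting the linear forests on at most $4$ vertices shows that the only such are $4P_1$, $2P_2$ and $P_2+2P_1$, and every linear forest on at least $5$ vertices contains one of these three as an induced subgraph (if it has two vertex-disjoint edges with no edge between them it contains $2P_2$; otherwise it equals $P_k+sP_1$ with $k\le 4$, and having at least $5$ vertices then forces an induced $4P_1$ or $P_2+2P_1$). It therefore remains to show that {\sc Colouring} is \NP-complete on $4P_1$-free, on $2P_2$-free and on $(P_2+2P_1)$-free graphs. For $4P_1$-free graphs, note that $G$ is $4P_1$-free iff $\overline{G}$ is $K_4$-free and that $\chi(G)$ equals the least number of cliques covering $V(\overline{G})$; since a $K_4$-free graph on $3t$ vertices admits a clique cover of size $t$ if and only if it partitions into $t$ triangles, one reduces from {\sc Partition Into Triangles} on $K_4$-free graphs (equivalently, {\sc Colouring} is already \NP-complete for graphs of independence number $3$). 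For $2P_2$-free and $(P_2+2P_1)$-free graphs, whose complements are respectively $C_4$-free and diamond-free, I would design direct polynomial reductions (from a suitable partition or satisfiability problem) whose output graph contains no induced $2P_2$, respectively no induced $P_2+2P_1$.

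The induced-subgraph bookkeeping and the cograph recursion are routine, and the $4P_1$-free case reduces cleanly to a classical \NP-complete problem. The genuine difficulties are (i) verifying, via the paw-free structure theorem, that the component/join recursion for $(P_1+P_3)$-free graphs really terminates at graphs of independence number at most $2$; and (ii) the \NP-hardness reductions for $2P_2$-free and for $(P_2+2P_1)$-free graphs: here, unlike the $4P_1$-free case, there is no bounded independence number to exploit, and, unlike the cycle and claw cases, no off-the-shelf hard subclass, so the gadgets must be engineered so that every pair of independent edges they introduce is linked by a further edge. I expect essentially all of the effort to be spent on~(ii).
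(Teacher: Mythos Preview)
The paper does not prove this theorem: it is quoted as a preliminary result from Kr\'al', Kratochv\'{\i}l, Tuza and Woeginger~\cite{KKTW01} and used as a black box. There is therefore no in-paper proof to compare your attempt against.

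That said, your sketch follows the structure of the original dichotomy proof and the tractable side is fine: the cograph recursion for $P_4$ and the paw-free/Olariu route for $P_1+P_3$ both work, and your reduction of the hard linear-forest cases to the three minimal obstructions $4P_1$, $2P_2$, $P_2+2P_1$ is correct. The genuine gaps are exactly where you flag them. For $2P_2$-free and $(P_2+2P_1)$-free graphs you give no reduction at all, only a promise to ``design direct polynomial reductions''; this is the heart of the hardness direction and cannot be waved through, since (as you note) neither bounded independence number nor an off-the-shelf hard subclass is available. Your $4P_1$-free argument also leans on the \NP-completeness of {\sc Partition Into Triangles} restricted to $K_4$-free graphs, which is true but not the textbook version of the problem and needs its own justification (the standard Garey--Johnson gadgets create $K_4$'s). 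So as it stands your proposal is a correct outline with two of the three base hardness cases missing and the third resting on an unreferenced restricted \NP-hardness claim.
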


\begin{theorem}[\cite{Ho81,LG83}]\label{t-claw}
For every integer $k\geq 3$, $k$-{\sc Colouring} is \NP-complete for claw-free graphs.
\end{theorem}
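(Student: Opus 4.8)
The plan is to reduce from the problem of deciding whether a graph has a proper $k$-edge-colouring, using the line graph operation. Recall that the \emph{line graph} $L(G)$ of a graph $G$ has as its vertices the edges of $G$, two of them being adjacent in $L(G)$ exactly when the corresponding edges of $G$ share an endpoint. Two elementary observations drive the argument. First, $L(G)$ is always claw-free: the neighbours in $L(G)$ of a vertex $e=uv$ split into the edges of $G$ incident with $u$ and those incident with $v$, and each of these two sets induces a clique in $L(G)$; since a graph whose vertex set is covered by two cliques has no independent set of size~$3$, it contains no induced $K_{1,3}$. Second, a proper $k$-edge-colouring of $G$ is literally the same object as a proper $k$-colouring of the vertices of $L(G)$, because adjacency of edges in $G$ is, by definition, adjacency of the corresponding vertices in $L(G)$. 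Hence $G$ admits a proper $k$-edge-colouring if and only if $L(G)$ is $k$-colourable.

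It then remains to invoke the \NP-hardness of edge-colouring. Holyer~\cite{Ho81} showed that it is \NP-complete to decide whether a cubic graph is $3$-edge-colourable, and Leven and Galil~\cite{LG83} extended this, proving that for every fixed integer $k\geq 3$ it is \NP-complete to decide whether a $k$-regular graph is $k$-edge-colourable; in particular the problem stays \NP-complete for arbitrary input graphs. Given such an instance $G$ for the value of $k$ under consideration, we compute $L(G)$ in polynomial time; by the two observations above, $L(G)$ is a claw-free graph that is $k$-colourable precisely when $G$ is $k$-edge-colourable. As {\sc $k$-Colouring} is clearly in \NP\ --- a candidate colouring can be checked in polynomial time --- this yields the \NP-completeness of {\sc $k$-Colouring} on claw-free graphs for every $k\geq 3$.

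The argument carries essentially no combinatorial difficulty: all the real work is already contained in the theorems of Holyer and of Leven and Galil, and the line graph reduction is routine. The only things that need a little care are formulating the correspondence between edge-colourings of $G$ and vertex-colourings of $L(G)$ precisely enough that the construction is a genuine polynomial-time many-one reduction, and citing the right source for the relevant $k$ (Holyer for $k=3$, Leven and Galil for all $k\geq 3$). I would also spell out explicitly why $L(G)$ is claw-free, since that two-clique cover observation is the single structural property of line graphs the proof depends on.
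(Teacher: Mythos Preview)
Your argument is correct and is exactly the standard derivation behind the citations~\cite{Ho81,LG83}: pass to the line graph, observe that line graphs are claw-free, and use that proper $k$-edge-colourings of $G$ are precisely proper $k$-colourings of $L(G)$. The paper does not give its own proof of this theorem at all; it is quoted in the preliminaries as a known result with the two references attached, so there is nothing to compare against beyond noting that your write-up spells out what those citations encode.
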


\begin{theorem}[\cite{EHK98}]\label{t-girth1}
For all integers $k\geq 3$ and $g\geq 3$, {\sc $k$-Colouring}  is \NP-complete for graphs with girth at least~$g$ (and with maximum degree at most~$6k^{13}$).
\end{theorem}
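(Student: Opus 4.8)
The plan is to reduce from {\sc $k$-Colouring} on graphs of bounded maximum degree, which is well known to be \NP-complete for each fixed $k\geq 3$ (one may first make the degree bounded by a constant depending only on $k$ using a girth-oblivious degree-reduction gadget, e.g.\ trees of copies of $K_{k+1}$ minus an edge), and then to \emph{raise the girth} by a local gadget replacement. Membership in \NP\ is immediate. Since $k$ and $g$ are fixed, every gadget we design has constant size, so the reduction blows the instance up only by a constant factor and is trivially polynomial; all the difficulty is in exhibiting one bounded-degree gadget.

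Concretely, I would first isolate an \emph{inequality gadget}: a graph $F=F_{k,g}$ with two distinguished non-adjacent vertices $s,t$ such that (i) $F$ has girth at least $g$ and $\dist_F(s,t)\geq g$; (ii) in every $k$-colouring of $F$ one has $c(s)\neq c(t)$; and (iii) for every ordered pair $(a,b)$ of distinct colours there is a $k$-colouring $c$ of $F$ with $c(s)=a$ and $c(t)=b$. Given such an $F$, the reduction takes a bounded-degree instance $G$ and replaces every edge $uv$ of $G$ by a fresh copy of $F$, identifying $\{s,t\}$ with $\{u,v\}$; call the result $G'$. Then $G'$ is $k$-colourable iff $G$ is: from a $k$-colouring of $G$, property (iii) extends the colours on each pair of terminals to the whole gadget; conversely, restricting a $k$-colouring of $G'$ to $V(G)\subseteq V(G')$ gives a $k$-colouring of $G$ by property (ii) applied inside each gadget. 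For the girth, any cycle of $G'$ either lies inside a single gadget (length $\geq g$ by (i)), or it projects onto a closed walk of $G$ and is thus obtained from a cycle of $G$ by replacing each edge with an $s$--$t$ path inside some gadget, each of length $\geq \dist_F(s,t)\geq g$; a cycle that ``doubles back'' along one inflated edge uses two internally disjoint $s$--$t$ paths of a gadget and hence already contains a cycle of length $\geq g$. Finally $\Delta(G')\leq \Delta(G)\cdot\Delta(F)$, and both factors depend only on $k$, which after optimising the constructions gives the stated bound $6k^{13}$.

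The heart of the argument — and the step I expect to be the main obstacle — is building $F$ with girth (and terminal distance) at least $g$ \emph{while keeping the maximum degree bounded by a function of $k$ alone}, uniformly in $g$. The natural source is a \emph{uniquely $k$-colourable} graph $U$ of girth at least $g$: since all its $k$-colourings agree up to permutation of the colour names, any two vertices in distinct colour classes automatically satisfy (ii), and permuting colours gives (iii). One then has to argue that two such vertices can be chosen at distance $\geq g$ (a $k$-chromatic graph of large girth cannot have small radius, as it contains a long odd cycle), and, if needed, attach a bounded-degree high-girth connector to meet the non-adjacency and distance requirements exactly. So the crux reduces to the statement that for all $k\geq 2$ and $g\geq 3$ there is a uniquely $k$-colourable graph of girth at least $g$ and maximum degree at most a constant depending only on $k$: for $k=2$ a long even cycle works, while for $k\geq 3$ this needs a genuine recursive construction that starts from a small uniquely-colourable building block and repeatedly substitutes to push the girth up, and it is precisely the bookkeeping of that recursion that yields the explicit polynomial degree bound. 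Along the way I would also note the routine points that the base instances can be taken of fixed bounded degree for every $k\geq 3$, and that the class ``girth $\geq g$, maximum degree $\leq 6k^{13}$'' is closed under induced subgraphs, so that hardness for it is meaningful.
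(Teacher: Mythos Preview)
The paper does not prove this theorem at all: it is quoted verbatim from Emden-Weinert, Hougardy and Kreuter~\cite{EHK98} and used as a black box (in the proof of Theorem~\ref{t-main2}(3)). So there is no ``paper's own proof'' to compare against.

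That said, your sketch is faithful to the original source. The reduction in~\cite{EHK98} is exactly of the edge-replacement type you describe, and the technical core of that paper is precisely the construction you isolate as the crux: for all $k\geq 3$ and $g\geq 3$ there exists a uniquely $k$-colourable graph of girth at least $g$ whose maximum degree is bounded by a polynomial in $k$ independent of $g$. Your recursive-substitution intuition is on the right track; the explicit $6k^{13}$ comes from careful bookkeeping in that construction. One small quibble: your argument that two terminals in different colour classes can be found at distance $\geq g$ (``a $k$-chromatic graph of large girth cannot have small radius, as it contains a long odd cycle'') is not quite a proof as stated---a long odd cycle bounds the diameter from below, not the distance between two specific colour classes---but in the actual construction the graph is large enough that this is not an issue, and in any case one can pad with a suitable path-like connector as you suggest.
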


\begin{theorem}[\cite{MS16}]\label{t-ms16}
{\sc $3$-Colouring} is \NP-complete for $C_3$-free graphs of diameter~$3$.
\end{theorem}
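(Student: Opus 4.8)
Membership in \NP\ is clear, since a $3$-colouring is a polynomially verifiable certificate; the task is to prove \NP-hardness. The plan is to reduce from \textsc{$3$-Colouring} restricted to triangle-free graphs of bounded maximum degree, which is \NP-complete by Theorem~\ref{t-girth1} (apply it with $g=4$, which forbids $C_3$, and gives maximum degree at most the constant $6\cdot 3^{13}$). From such an input graph $H$ I would build, in polynomial time, a $C_3$-free graph $G'$ of diameter at most $3$ (made exactly $3$ by attaching one pendant vertex, if one insists) that is $3$-colourable if and only if $H$ is.

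The construction has three parts. First, for every edge $uv$ of $H$ we insert a fixed triangle-free \emph{inequality gadget} $\Gamma_{uv}$ with two terminals identified with $u$ and $v$, such that in any proper $3$-colouring the terminals receive distinct colours while, conversely, any choice of distinct colours for $\{u,v\}$ extends over $\Gamma_{uv}$; gadgets of this type are exactly what makes $3$-colouring hard on triangle-free graphs (as in the proof of Theorem~\ref{t-girth1}). The resulting graph $H^{\ast}$ is triangle-free and its restriction to $V(H)$ is $3$-colourable precisely when $H$ is. Second, we add a constant-size triangle-free \emph{hub} $B$, which plays the role of a colour palette and of a routing backbone. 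Third, we add edges between $V(H^{\ast})$ and $B$, chosen so that (a) every vertex of $H^{\ast}$ lies within distance $2$ of a fixed part of $B$, forcing $\dist(x,y)\le 3$ for all $x,y\in V(G')$ once $B$ itself has small diameter, and (b) no edge of $G'$ has both endpoints inside a common neighbourhood $N_{G'}(b)$, $b\in B$, which keeps $G'$ triangle-free.

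The main obstacle, as I see it, is making (a) and (b) hold simultaneously. A single dominating vertex is excluded (its neighbourhood would have to be independent, yet $H^{\ast}$ has edges, and adjacency to a single vertex only gives distance $2$ between its neighbours and distance $4$ between their private neighbours), and more generally any hub scheme that attaches essentially all of $V(H^{\ast})$ to two adjacent hub vertices would force $H^{\ast}$ to be bipartite, contradicting the fact that $H^{\ast}$ must remain non-$3$-colourable whenever $H$ is. The resolution is to let $B$ and the attachment pattern depend finely on $H$, exploiting that $\Delta(H)$ is a fixed constant: the edge gadgets and their hub attachments can be ``spread out'' so that distinct gadgets touch pairwise far-apart parts of $B$, each gadget's internal vertices are distributed over its own slice of $B$ according to a fixed triangle-free template, and the unavoidable odd-cycle structure of $H^{\ast}$ is localised within these slices rather than forced across the whole graph. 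Carrying out this distribution so that the two terminals of each $\Gamma_{uv}$ can never be ``short-circuited'' (by a triangle, or by an accidental short path that would let them share a colour) while every pair of vertices of $G'$ still sits at distance at most $3$ is the technically demanding step and the heart of the proof.

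To finish, one checks both directions. From a proper $3$-colouring $c$ of $H$: colour $B$ by its palette colouring and, for each $uv\in E(H)$, extend $c$ across $\Gamma_{uv}$ using $c(u)\ne c(v)$; by the design of the hub attachments this yields a proper $3$-colouring of $G'$. Conversely, the restriction to $V(H)$ of any proper $3$-colouring of $G'$ is a proper $3$-colouring of $H$, since each $\Gamma_{uv}$ enforces $c(u)\ne c(v)$. As $\Delta(H)$ is constant, $|V(G')|=O(|V(H)|)$, so the reduction is polynomial, which together with membership in \NP\ gives \NP-completeness.
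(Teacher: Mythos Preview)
The paper does not prove Theorem~\ref{t-ms16}; it is quoted from~\cite{MS16} in the Preliminaries as one of the external results the paper relies on (and in the proof of Theorem~\ref{t-main2}, Case~2, the authors simply write ``This case is Theorem~\ref{t-ms16} (proven in~\cite{MS16})''). So there is no in-paper argument to compare against.

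Evaluating your attempt on its own terms: it is not a proof but a description of the obstacles, together with a hope that they can be overcome. You correctly isolate the tension between ``diameter $\le 3$'' and ``$C_3$-free'': a universal vertex is excluded, and any hub that is too concentrated would force the rest of the graph to be bipartite. But the passage that is meant to resolve this tension is purely programmatic. You assert the existence of a triangle-free inequality gadget $\Gamma_{uv}$, a hub $B$, and an attachment scheme satisfying your conditions (a) and (b), and then write that ``carrying out this distribution\ldots\ is the technically demanding step and the heart of the proof'' --- and stop there. No gadget is specified, no hub is exhibited, and no verification of either diameter or triangle-freeness is carried out. There is also an internal inconsistency: you first declare $B$ to be of ``constant size'', and a few lines later say $B$ must ``depend finely on $H$'' so that the (unboundedly many) edge gadgets touch ``pairwise far-apart parts of $B$''. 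A constant-size hub cannot host arbitrarily many pairwise far-apart slices, so at least one of these two descriptions has to give.

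The actual argument in~\cite{MS16} is quite different in flavour from your sketch (it starts from a satisfiability problem and builds an explicit triangle-free instance whose diameter is checked by a case analysis), and the diameter-$3$ verification there is the substantive part. If you want to reprove the theorem, you need to exhibit a concrete construction and verify both properties; the outline above does not yet do either.
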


A {\it list assignment} of a graph $G=(V,E)$ is a function $L$ that prescribes a {\it list of admissible colours} $L(u)\subseteq \{1,2,\ldots\}$ to each $u\in V$.
A colouring $c$  {\it respects} ${L}$ if  $c(u)\in L(u)$ for every $u\in V$. For an integer $\ell\geq 1$, we say that $L$ is an {\it $\ell$-list assignment} 
if $|L(u)|\leq \ell$ for each $u\in V$. For an integer $k\geq 1$, we say that $L$ is an {\it list $k$-assignment} 
if $L(u)\subseteq \{1,\ldots,k\}$ for each $u\in V$.
The {\sc List Colouring} problem is to decide if a graph $G$ with a list assignment~$L$ has a colouring that respects $L$.
For a fixed integer $\ell\geq 1$, the {\sc $\ell$-List Colouring} problem is to decide if a graph $G$ with an $\ell$-list assignment $L$ has a colouring that respects $L$. 
For a fixed integer $k\geq 1$, the {\sc List $k$-Colouring} problem is to decide if a graph $G$ with a list $k$-assignment $L$ has a colouring that respects $L$. Note that {\sc $k$-Colouring} is a special case of {\sc List $k$-Colouring} (and that the latter is a special case of {\sc $k$-List Colouring}).

Our strategy for obtaining polynomial-time algorithms for {\sc List  $3$-Colouring} for special graph classes is often to reduce the input to a polynomial number of instances of $2$-{\sc List Colouring}. The reason is that we can then apply the following 
 well-known result of Edwards.

\begin{theorem}[\cite{Ed86}]\label{t-2sat}
The {\sc $2$-List Colouring} problem is linear-time solvable.
\end{theorem}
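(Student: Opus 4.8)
The plan is to reduce {\sc $2$-List Colouring} in linear time to {\sc $2$-Satisfiability}, which is solvable in linear time by the classical implication-graph algorithm of Aspvall, Plass and Tarjan. Let $(G,L)$ be an instance of {\sc $2$-List Colouring} with $G=(V,E)$. First I would deal with degenerate lists: if $L(u)=\emptyset$ for some $u\in V$, return \textsc{no}; if $|L(u)|=1$, the colour of $u$ is forced, so I remove the forced colour from the lists of all neighbours of $u$, delete $u$, and repeat this propagation until every remaining vertex has a list of size exactly~$2$. This propagation takes linear total time, and it may itself create an empty list, in which case we return \textsc{no}.

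For the remaining graph, introduce for every vertex $u$ a Boolean variable $x_u$, where setting $x_u$ to true (false) encodes assigning $u$ the first (second) colour of its two-element list $L(u)$. For every edge $uv\in E$ and every colour $c\in L(u)\cap L(v)$, add the $2$-clause forbidding $u$ and $v$ from both receiving colour~$c$: this is the disjunction of the literal of $x_u$ meaning ``$u$ is not coloured $c$'' with the literal of $x_v$ meaning ``$v$ is not coloured $c$''. Since $|L(u)\cap L(v)|\leq 2$, each edge contributes at most two clauses, so the formula has $O(|V|+|E|)$ variables and clauses and is built in linear time.

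It remains to argue equivalence: a colouring of $G$ respecting $L$ exists if and only if the constructed $2$-CNF formula is satisfiable. Given a respectful colouring $c$, set each $x_u$ according to which of the two colours of $L(u)$ is used; every clause is satisfied because no edge is monochromatic. Conversely, a satisfying truth assignment yields a colouring $c$ with $c(u)\in L(u)$ for all $u$, and the clauses guarantee $c(u)\neq c(v)$ for each edge $uv$ (if $c(u)=c(v)=c$ then the clause for the pair $(uv,c)$ would be violated). Applying the linear-time algorithm for {\sc $2$-Satisfiability} then finishes the proof.

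I do not expect a genuine obstacle here: the only points needing care are the preprocessing of lists of size at most~$1$, so that the ``first colour / second colour'' encoding is well defined, and verifying that both the propagation and the clause construction run in linear time; the substantive content is entirely delegated to the known linear-time solvability of {\sc $2$-Satisfiability}.
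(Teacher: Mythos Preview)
Your argument is correct: the reduction to {\sc $2$-Satisfiability} is the standard proof, and the details you give (preprocessing lists of size at most~$1$, encoding each remaining vertex by a Boolean variable, adding at most two clauses per edge, and invoking the linear-time implication-graph algorithm) are all sound.

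There is nothing to compare against, however: the paper does not prove Theorem~\ref{t-2sat} at all but simply quotes it as a known result of Edwards~\cite{Ed86} and uses it as a black box throughout. So your proposal supplies a proof where the paper provides none; it is exactly the argument one would expect and there is no genuine gap.
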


We will also use the following result, which includes the Hoffman-Singleton Theorem, which provides a description of regular graphs of diameter~$2$ and girth~$5$.
\begin{theorem}[\cite{D73,HS60,S68}]\label{t-all}
For every $d\geq 1$, every graph of diameter $d$ and girth $2d+1$ is $p$-regular for some integer~$p$.
Moreover, if $d=2$, then there are only four possible values of p ($p=2,3,7,57$) and if $d\geq 3$, then such graphs are cycles (of length $2d+1$).
\end{theorem}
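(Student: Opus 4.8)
The plan is to split the statement into three independent parts: (i) every graph $G$ of diameter $d$ and girth $2d+1$ is regular; (ii) the classification of the possible degrees when $d=2$; and (iii) the claim that for $d\geq 3$ only cycles occur. For part (i), since $G$ has finite diameter it is connected, so it suffices to prove that adjacent vertices have equal degree. First I would root a BFS at a vertex $v$, writing $N_i$ for the set of vertices at distance $i$ from $v$, so $V=N_0\cup\cdots\cup N_d$ with $N_d\neq\emptyset$. The key structural observation is that the girth being $2d+1$ rules out two distinct geodesics of length $\leq d$ with the same endpoints (their union would contain a cycle of length $\leq 2d$); from this one deduces that every vertex of $N_i$ with $1\le i\le d$ has a \emph{unique} neighbour in $N_{i-1}$, that $N_i$ is independent for $i\le d-1$, and that there are no edges between $N_i$ and $N_j$ with $|i-j|\geq 2$. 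So, seen from $v$, the graph is a tree down to layer $d-1$ with an arbitrary triangle-free graph on the last layer $N_d$. The diameter hypothesis is then what turns this local picture into global regularity: a short bookkeeping of common neighbours (already illustrated in the case $d=2$) shows that every vertex in the second BFS-layer of a root has the same degree as the root, and that the first layer of a root is equi-degreed; comparing these two facts for two suitably nested roots forces all degrees to coincide. Thus $G$ is $p$-regular; this is Singleton's theorem~\cite{S68}.

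Once regularity is available, the layer sizes above are forced, $|N_i(v)|=p(p-1)^{i-1}$ for $1\le i\le d$, so $G$ is a Moore graph on $n=1+p\sum_{i=0}^{d-1}(p-1)^i$ vertices. For part (ii), $d=2$ gives $n=p^2+1$, and the adjacency matrix satisfies $A^2+A-(p-1)I=J$, because two adjacent vertices have no common neighbour (girth $5$) and two non-adjacent vertices have exactly one (diameter $2$ and girth $5$). Apart from the principal eigenvalue $p$, the eigenvalues are the roots of $\lambda^2+\lambda-(p-1)=0$; demanding that their multiplicities be nonnegative integers summing to $p^2$ with total trace $0$ either forces $p=2$ (when $4p-3$ is not a square) or, writing $4p-3=s^2$, yields $s\mid p(p-2)$ and hence $s\mid 16p(p-2)=s^4-2s^2-15$, so $s\mid 15$ and $p\in\{2,3,7,57\}$; this is Hoffman--Singleton~\cite{HS60}. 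For part (iii), $d\geq 3$: the Moore graph is distance-regular, its eigenvalues are roots of explicit (Lloyd-type) polynomials, and a Diophantine analysis of the integrality of the eigenvalue multiplicities shows that no degree $p\geq 3$ is feasible, so $p=2$ and $G=C_{2d+1}$; this is Damerell's theorem~\cite{D73} (see also Bannai--Ito).

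The hard part is twofold. The regularity step (i), which looks routine, is actually the most delicate \emph{elementary} argument: the girth only buys local tree structure, and it is genuinely the diameter bound that must be exploited, via a somewhat fiddly count of common neighbours at the deepest layer, to upgrade "locally regular" to "globally regular". The second obstacle is qualitatively different: the non-existence result (iii) for $d\geq 3$ has no clean low-dimensional linear-algebra shortcut analogous to the $2\times2$ eigenvalue computation available when $d=2$, and requires the full spectral theory of distance-regular graphs together with a careful number-theoretic argument, which I would cite rather than reprove. Accordingly, in the write-up I would present the reduction to a Moore graph explicitly and then assemble the theorem from~\cite{HS60},~\cite{S68} and~\cite{D73}.
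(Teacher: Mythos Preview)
The paper does not prove Theorem~\ref{t-all}: it is stated as a known result with references~\cite{D73,HS60,S68} and is used only as a black box (in the proof of Theorem~\ref{t-main2}(1), where the authors write ``This case follows immediately from Theorem~\ref{t-all}''). So there is no ``paper's own proof'' to compare your proposal against.

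That said, what you have written is a reasonable outline of the classical arguments in the cited papers. Your part~(i) is the content of Singleton~\cite{S68}; your part~(ii) is the standard Hoffman--Singleton spectral/integrality argument~\cite{HS60}; and for part~(iii) you correctly identify that Damerell's result~\cite{D73} (or the parallel Bannai--Ito proof) is the right reference and that it relies on the spectral theory of distance-regular graphs rather than an elementary count. One small point: in~(i) your description of the degree-matching step is a little vague (``a short bookkeeping of common neighbours \ldots\ comparing these two facts for two suitably nested roots''); Singleton's actual argument is more concrete, showing first that all vertices in $N_d(v)$ have the same degree and then that this common degree equals $\deg(v)$, which together with connectedness gives regularity. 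If you intend to include a self-contained proof rather than cite, that step deserves to be spelled out.
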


\section{Graphs of Bounded Diameter and Girth}\label{s-cycle}

In this section we will examine the trade-offs between diameter and girth, in particular for {\sc $3$-Colouring}.

Recall that Mertzios and Sprirakis~\cite{MS16} proved that $3$-{\sc Colouring} is \NP-complete for graphs of diameter~$3$ and girth~$4$ (Theorem~\ref{t-ms16}).
We extend their result in our next theorem, partially displayed in Table~\ref{t-table}. This theorem shows that there is still a large gap for which we do not know the computational complexity of {\sc $3$-Colouring} for graphs of diameter~$d$ and girth~$g$.
Note that $g\leq 2d+1$ by Lemma~\ref{l-cycle}.

\begin{theorem}\label{t-main2}
Let $d$ and $g$ be two integers with $d\geq 2$ and $g\geq 3$.
Then the following statements hold for graphs of diameter at most~$d$ and girth at least~$g$:
\begin{enumerate}
\item {\sc List Colouring} is polynomial-time solvable if $g\geq 2d+1$;
\item {\sc $3$-Colouring}  is \NP-complete if $d=3$ and $g\leq 4$;
\item {\sc $3$-Colouring} is \NP-complete  if $d\geq 4p$ and $g\leq 4p+2$ for some integer $p\geq 1$.
\end{enumerate}
\end{theorem}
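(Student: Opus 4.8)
The plan is to prove the three parts separately, using known hardness results as black boxes and controlling diameter by carefully adding vertices.

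\emph{Part 1.} If $g \geq 2d+1$, then since we always have $g \leq 2d+1$ (Lemma~\ref{l-cycle}), in fact $g = 2d+1$. By Theorem~\ref{t-all}, every connected graph of diameter $d$ and girth $2d+1$ is $p$-regular for some $p$, and moreover if $d \geq 3$ it is a cycle of length $2d+1$, while if $d = 2$ then $p \in \{2,3,7,57\}$, so the graph has at most a bounded number of vertices. In all cases the connected components of such a graph come from a finite list (for $d = 2$: the $5$-cycle, the Petersen graph, the Hoffman--Singleton graph, and the hypothetical Moore graph on $3250$ vertices; for $d \geq 3$: the single cycle $C_{2d+1}$). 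Since {\sc List Colouring} is solvable in constant time on any fixed finite set of graphs, and since a disconnected input is a yes-instance iff every component is, {\sc List Colouring} is polynomial-time (indeed essentially constant-time) solvable here. I should be slightly careful that the ``diameter at most $d$ and girth at least $g$'' class with $g = 2d+1$ could in principle also contain forests (girth $\infty$) of diameter at most $d$; but {\sc List Colouring} on forests is linear-time solvable by dynamic programming, so this case is handled too. The main subtlety is just the bookkeeping over these finitely many exceptional graphs plus forests.

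\emph{Part 2.} For $d = 3$, $g = 4$ this is exactly Theorem~\ref{t-ms16} (Mertzios and Spirakis): {\sc $3$-Colouring} is \NP-complete for $C_3$-free graphs of diameter $3$, and being $C_3$-free with at least one cycle means girth at least $4$. For $d = 3$, $g = 3$ we need \NP-completeness for graphs of diameter at most $3$ with no girth restriction; this follows by the standard trick of taking an arbitrary graph $G$ on which {\sc $3$-Colouring} is \NP-hard and adding a dominating vertex — except a dominating vertex forces its own colour and breaks $3$-colourability, so instead one adds a gadget: take two copies of $G$ plus a ``hub'' structure, or more simply observe (as is folklore, and cited in the excerpt) that $3$-{\sc Colouring} is \NP-complete for graphs of diameter $3$ directly. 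I would just invoke the remark in Section~\ref{s-focus} that $k$-{\sc Colouring} is \NP-complete for diameter $d$ for all $(k,d)$ with $k \geq 3$, $d \geq 2$ except $(3,2)$ (and $(3,3)$, which is Theorem~\ref{t-ms16}); combining these covers $d = 3$, $g \in \{3,4\}$.

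\emph{Part 3.} This is the real content: given $p \geq 1$, show {\sc $3$-Colouring} is \NP-complete for graphs of diameter at most $4p$ and girth at least $4p+2$. Start from Theorem~\ref{t-girth1}: {\sc $3$-Colouring} is \NP-complete for graphs $G$ of girth at least $g_0$ for any desired constant $g_0$, with bounded maximum degree (say $\leq \Delta$). Take $G$ with girth at least, say, $4p+2$ (and large) and bounded degree; it may have huge diameter. The plan is to perform a ``diameter-reduction'' reduction: attach to $G$ an auxiliary structure $T$ — intuitively a subdivided spanning-tree-like gadget, or a collection of long induced paths of length roughly $2p$ emanating from a new apex region — so that every original vertex ends up within distance $2p$ of some central ``anchor'' and anchors are pairwise within distance $2p$, giving total diameter at most $4p$, while (a) the girth stays at least $4p+2$ because all new cycles created are long (the subdivisions are long enough), and (b) the gadget is $3$-colourable with enough freedom that it neither forbids nor forces any colouring of $G$, i.e.\ $G \cup T$ is $3$-colourable iff $G$ is. Concretely, I expect the gadget to be: pick a BFS layering of $G$ into layers $L_0, L_1, \dots$; from a new vertex $u^*$ run paths that reach into $G$, each path subdivided so it has length exactly $2p$ and so the internal vertices carry ``free'' lists, and do this so every vertex of $G$ is at distance $\leq 2p$ from $u^*$ through the gadget; since the added paths have length $2p \geq $ half the girth bound, no short cycle is created. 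The delicate points are (i) ensuring the *induced* distance (shortest *induced* path, per the paper's definition of distance) is what we think — chords could shorten it, so the gadget must be built so that no chords between gadget and $G$ appear, which forces the path attachments to be at carefully chosen single vertices; (ii) verifying girth: a new cycle uses at least two gadget paths of length $2p$ plus a $G$-path, total $\geq 4p+2$, or uses one gadget path and a $G$-path between its two $G$-endpoints, which also must be long if we attach endpoints cleverly; (iii) the equivalence of $3$-colourability, which is routine once the gadget paths have length $\equiv$ a convenient residue mod something (paths are always $3$-list-colourable with any prescribed endpoint colours as long as they have length $\geq 2$, by Theorem~\ref{t-2sat}), so this is the easy part.

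\emph{Main obstacle.} The hard part will be Part 3, specifically designing the attachment gadget so that it simultaneously (a) shrinks diameter to $\leq 4p$ measured by shortest *induced* paths — this is why the factor-$4$ relation between $d$ and $g$ appears, as each ``spoke'' must have length about $2p \approx g/2$ to avoid creating short cycles, and we need two spokes to connect any two original vertices — and (b) creates no short induced cycle and no colouring constraint. Getting the quantitative trade-off exactly ($d \geq 4p$ versus $g \leq 4p+2$) right, rather than off by a constant, is where the care goes; the rest (Parts 1 and 2, and the colouring-equivalence in Part 3) reduces to cited theorems and standard path arguments.
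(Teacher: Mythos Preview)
Parts~1 and~2 of your proposal match the paper's proof (which simply cites Theorem~\ref{t-all} and Theorem~\ref{t-ms16} respectively), modulo your extra remarks on forests and on the $g=3$ subcase, which are fine.

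Part~3 has a genuine gap. Your single-apex-with-spokes gadget cannot achieve diameter $\leq 4p$ and girth $\geq 4p+2$ simultaneously: if every vertex of $G$ is joined to a common apex $u^*$ by a path of length $2p$, then any \emph{edge} $v_iv_j$ of $G$ closes a cycle of length $4p+1 < 4p+2$ through the two spokes. Your remark that a cycle ``uses at least two gadget paths of length $2p$ plus a $G$-path, total $\geq 4p+2$'' implicitly assumes the $G$-path has length at least~$2$, which fails whenever the two spoke endpoints are adjacent in $G$. The escape clause ``if we attach endpoints cleverly'' is exactly the missing idea, and it cannot be fixed while keeping a single apex: to get diameter $\leq 4p$ you need every vertex within distance $2p$ of the apex, and then adjacent vertices of $G$ always produce the short cycle.

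The paper's construction avoids an apex altogether. It starts from a graph $G$ of girth at least $8p-3$ (roughly twice the target girth, not $4p+2$), gives each $v_i$ a single pendant neighbour $v_{i,1}$, and then, \emph{only} for pairs $v_i,v_j$ with $\dist_G(v_i,v_j) > 2p-1$, adds a path of length $2p+2$ from $v_i$ to $v_j$ through $v_{i,1}$ and $v_{j,1}$. Pairs that are already close use $G$'s own short paths for the diameter bound, so no gadget path is attached between nearby (in particular, adjacent) vertices, and the short-cycle problem disappears. The large starting girth $8p-3$ is not used in the girth analysis of $G'$ but is essential in the \emph{diameter} analysis: the one case where the diameter computation would give $4p+1$ is ruled out because it would force a cycle of length at most $8p-4$ in $G$. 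Both of these ideas---pairwise paths only between far-apart vertices, and starting from girth $\approx 8p$ rather than $\approx 4p$---are absent from your plan and are what make the exact trade-off $d\geq 4p$, $g\leq 4p+2$ work.
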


\begin{proof}
{\bf 1.} This case follows
immediately 
from Theorem~\ref{t-all}.

\medskip
\noindent
{\bf 2.} This case is Theorem~\ref{t-ms16} (proven in~\cite{MS16}). 

\medskip
\noindent
{\bf 3.} We reduce $3$-{\sc{Colouring}} for graphs of girth at least $8p-3$, which is \NP-complete by Theorem~\ref{t-girth1}, to $3$-{\sc{Colouring}} for graphs of diameter at most $4p$ and girth at least $4p+2$. 
Let $G$ be a graph of girth at least $8p-3$. 
From $G$ we construct the graph $G^{\prime}$ as follows (see Figure~\ref{f-gprime} for an example):
\begin{itemize} {
\item label the vertices of $G$ $v_1$ to $v_n$;
\item for each vertex of $G$, add a new neighbour $v_{i,1}$;
\item for every two vertices $v_i$ and $v_j$ such that 
$\dist(v_i, v_j) > 2p-1$ add new vertices to form the path 
$v_iv_{i,1}v_{i,2,j}...v_{i, p+1, j}v_{j,p,i}...v_{j,1}v_j$, 
which has length~$2p+2$.
}
\end{itemize}

  \begin{center}
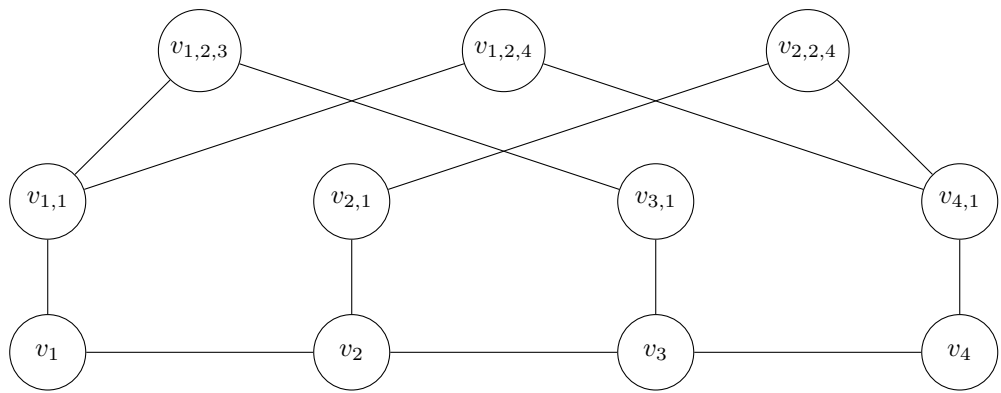
\begin{figure}[h]{
\begin{tikzpicture}[main_node/.style={circle,draw,minimum size=1cm,inner sep=3pt,scale=1]}]
	\node[main_node](v1) at (0,0){$v_1$};
	\node[main_node](v2) at (4,0) {$v_2$};
	\node[main_node](v3) at (8,0) {$v_3$};
	\node[main_node](v4) at (12,0) {$v_4$};
	\node[main_node](v5) at (0,2){$v_{1,1}$};
	\node[main_node](v6) at (4,2){$v_{2,1}$};
	\node[main_node](v7) at (8,2) {$v_{3,1}$};
	\node[main_node](v8) at (12,2) {$v_{4,1}$};
	\node[main_node](v9) at (6,4){$v_{1,2,4}$};
	\node[main_node](v10) at (2,4){$v_{1,2,3}$};
	\node[main_node](v11) at (10,4){$v_{2,2,4}$};
	
	\draw (v1)--(v2)--(v3)--(v4);
	\draw (v1)--(v5);
	\draw (v2)--(v6);
	\draw (v3)--(v7);
	\draw (v4)--(v8);
	\draw (v5)--(v9)--(v8);
	\draw (v5)--(v10)--(v7);
	\draw (v6) --(v11)--(v8);
\end{tikzpicture}
}
\caption{An example of a graph $G^{\prime}$, constructed in the proof of Theorem~\ref{t-main2}(3), for $p=1$. }\label{f-gprime}
\end{figure}
\end{center}

\noindent
First we show that $G^{\prime}$ has diameter at most {$4p$}. For any two vertices $v_i$ and $v_j$, either $\dist(v_i, v_j) \leq 2p-1$ in $G$ and thus in $G^\prime$ or we have the path $v_iv_{i,1}v_{i,2,j}...v_{i, p+1, j}v_{j,p,i}...v_{j,1}v_j$ and thus $\dist(v_i, v_j) \leq 2p+2$ in $G^\prime$. By similar arguments, we find that $\dist(v_i, v_{j,1}) \leq 2p+1$ and $\dist(v_{i,1}, v_{j,1}) \leq 2p+1$. 

Now consider two vertices $v_{a,r,b}$ and $v_{c,q,d}$ for some $2 \leq r \leq p+1$ and $2 \leq q \leq p+1$.
If $v_a=v_c$ or $v_b=v_d$, we find that $\dist(v_{a,r,b}, v_{c,q,d}) \leq 2p$.
Now suppose that $v_a,v_b,v_c,v_d$ are four distinct vertices.
If $\dist(v_a, v_c) \leq 2p-1$, then we deduce in first instance that 
\[
\begin{array}{lcl}
\dist(v_{a,r,b}, v_{c,q,d}) &\leq &r+q+2p-1\\[2mm] &\leq &(p+1)+(p+1)+(2p-1)\\[2mm] &\leq &4p+1.
\end{array}
\] 
Otherwise we have the path $v_{a,r,b}..v_{a,1}v_{a,2,c}...v_{a,p+1,c}v_{c,p,a}...v_{c,1}v_{c,2,d}...v_{c,q,d}.$ In that case, we find that $\dist(v_{a,r,b}, v_{c,q,d}) \leq (r-1)+p+p+ (q-1) \leq 4p$. In fact, if $\dist(v_{a,r,b}, v_{c,q,d})=4p+1$, then we must have $r=q=p+1$ and $\dist(v_a, v_c)=2p-1$.
Moreover, as we can also consider the pairs $(a,d)$, $(b,c)$ or $(b,d)$ instead of the pair $(a,c)$, we also find that $\dist(v_a, v_d)=\dist(v_b, v_c)=\dist(v_b, v_d)=2p-1$. In this case we have two paths $P$ and $Q$ of length $4p-2$ between $v_a$ and $v_b$, where $P$ contains $v_c$ and $Q$ contains $v_d$. 
In particular, the subpath of $P$ from $v_a$ to $v_c$ and the subpath of $P$ from $v_c$ to $v_b$ each have length $2p-1$. Then $v_d$ is not on $P$, as the existence of the vertex $v_{c,p+1, d}$ implies that $\dist(v_c, v_d) > 2p-1$. Hence, $P$ and $Q$ are two different paths.
The latter implies that $G$ has a cycle of length at most $8p-4$ which contradicts the assumption that $G$ has girth at least $8p-3$. We conclude that the diameter of $G^{\prime}$ is at most $4p$.

We now prove that $G^\prime$ has girth at least $4p+2$. Let $C$ be a cycle of $G^\prime$.
First suppose that $C$ only contains vertices of $G$. Then $C$ has length at least $8p-3$, as $G$ has girth at least $8p-3$.
If $C$ only contains vertices of $V(G^\prime)\setminus V(G)$, then by construction $C$ contains at least three vertices $v_{h,1}$, $v_{i,1}$ and $v_{j,1}$. As every path between any two such vertices has length $2p$, we find that $C$ has length at least $6p$.
Hence we may assume that $C$ contains at least one vertex of $V(G)$ and at least one vertex of $V(G^\prime)\setminus V(G)$.
All the vertices of $V(G^{\prime})\setminus V(G)$ except the vertices $v_{i,1}$ have degree~$2$. Hence, $C$ must contain the path $v_{i,1}..v_{i, p+1, j}\cdots v_{j,1}v_j$ for some $v_i$ and $v_j$ that are at distance greater than $2p-1$ in $G$. This path has length $2p+1$. If $C$ contains $v_{i,2,m}$ for some $m$ different from $j$, then $C$ also contains the path $v_{i,2,m}...v_{m,1}$, which has length $2p$. 
As $C$ must contain at least one other vertex, $C$ has length at least 
$4p+2$. We use the same arguments if $C$ contains $v_{j,2,m}$ for some $m$ different from $i$. Otherwise $C$ not only contains $v_j$ but also $v_i$. We find that $v_i$ and $v_j$ are at distance greater than $2p-1$, due to the existence of the path $v_iv_{i,1}..v_{i, p+1, j}\cdots v_{j,2}v_j$, which has length $2p+2$. Hence $C$ has length at least $(2p+2)+2p=4p+2$.

Finally, we show that $G$ is $3$-colourable if and only if $G^{\prime}$ is $3$-colourable.
If $G^\prime$ is $3$-colourable, then its induced subgraph $G$ is $3$-colourable.
Now suppose that $G$ is $3$-colourable. Let $c$ be a $3$-colouring of $G$. We first give each vertex $v_{i,1}$ a colour different from $v_i$.
Then it remains to observe that every $v_{a,r,b}$ has degree~$2$. Hence, we always have a colour available to colour such a vertex. In other words, we can extend $c$ to a $3$-colouring $c^\prime$ of $G^\prime$.
\end{proof}

\section{Generalized Claw-Free Graphs of Bounded Diameter}\label{s-polyad}

In this section we prove, among other things, our results on {\sc Colouring} and {\sc $k$-Colouring} for polyad-free graphs of bounded diameter; see also Table~\ref{fig:triad-summary}. Our first three results form the starting point of the research in this section. 

We start off with the following result for quasi-claw-free graphs.

\begin{theorem}\label{t-polynomial}
{\sc List $3$-Colouring} is polynomial-time solvable for quasi-claw-free graphs of diameter at most~$2$.
\end{theorem}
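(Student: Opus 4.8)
The plan is to reduce {\sc List $3$-Colouring} on a quasi-claw-free graph $G$ of diameter at most~$2$ to a polynomial number of instances of {\sc $2$-List Colouring}, which is linear-time solvable by Theorem~\ref{t-2sat}. The standard difficulty with this kind of reduction is controlling vertices whose list still has size~$3$ after we branch; we want to show that after guessing the colours on a small set of vertices, all remaining lists shrink to size at most~$2$. First I would use the diameter-$2$ condition: fix any vertex $w$ and observe that $V(G) = \{w\} \cup N(w) \cup N_2(w)$, where $N_2(w)$ is the set of vertices at distance exactly~$2$ from $w$. If $G$ is disconnected it has diameter $\infty$, so we may assume $G$ is connected; if $G$ has a dominating vertex the problem is essentially trivial, so the interesting case is when $N_2(w) \neq \emptyset$ for suitable $w$.

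The key structural point to extract from quasi-claw-freeness is a bound, after some branching, on how a vertex of large list-degree can look. Concretely, I would branch over the (constantly many) colourings of $\{w\} \cup X$ for a suitably chosen small set $X$ — for instance, a dominating pair inside a neighbourhood, or a constant-size hitting set for the ``bad'' configurations — and argue that in each branch every vertex either already has a list of size $\le 2$, or its neighbours in $N_2(w)$ must be monochromatically constrained. The quasi-claw-free hypothesis is exactly what lets us do the last step: for any two vertices $u,v$ at distance~$2$, there is a common neighbour $z$ whose every other neighbour sees $u$ or $v$; so once the colours near $z$ are fixed, the colour of $u$ forces, or is forced by, the colour of $v$ along the ``$z$-bridge''. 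Iterating this propagation, together with the fact that $G$ has diameter~$2$ (so distances to branch-seeds are short), collapses all remaining lists to size $\le 2$.

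More precisely, here is the intended skeleton of the argument. (i)~Reduce to $G$ connected, non-complete, with a chosen vertex $w$ such that $N_2(w)\neq\emptyset$. (ii)~Guess the colour of $w$; this removes $c(w)$ from the list of every vertex of $N(w)$, so all of $N(w)$ now has lists of size $\le 2$. (iii)~The remaining trouble is vertices in $N_2(w)$ with lists of size $3$. For such a vertex $v$, pick a shortest path $v\text{-}z\text{-}w$ and use quasi-claw-freeness at the pair $\{w,v\}$ (or at pairs of distance-$2$ vertices inside $N_2(w)$) to find a common neighbour $z$ with the domination property; branch over the colour of $z$ and of the constantly many private-type neighbours of $z$ not dominated yet. (iv)~Show this forces, across all of $N_2(w)$, that the ``still size-$3$'' vertices fall into a bounded number of classes that behave like a single vertex, so only a polynomial number of overall branches is needed, and in each branch we are left with a $2$-list instance. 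Apply Theorem~\ref{t-2sat} and return \texttt{yes} iff some branch is a \texttt{yes}-instance.

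The main obstacle I anticipate is step~(iii)--(iv): turning the ``every neighbour of $z$ sees $u$ or $v$'' condition into an actual bound on the number of branches. The subtlety is that quasi-claw-freeness gives a common neighbour $z$ for \emph{each} pair at distance~$2$, but a priori these $z$'s and their private neighbours could proliferate. The fix is to work one branch-seed at a time and exploit that, after fixing $c(w)$, any size-$3$ list in $N_2(w)$ means the vertex is non-adjacent to all three colour classes among $w$'s already-coloured neighbours, which is a strong restriction; combined with the domination clause (a set of size $\le 2$ dominating each neighbourhood), this should pin the configuration down to constantly many types per seed and hence a polynomial total. One must also be careful, as the authors flag in Section~\ref{s-focus}, that the class of diameter-$\le 2$ quasi-claw-free graphs need not be hereditary, so the branching must stay within the class or else not rely on hereditariness — branching only on colourings (not on vertex deletions) avoids this pitfall.
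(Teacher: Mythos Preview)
Your proposal has a genuine gap in steps~(iii)--(iv), and it stems from choosing the wrong starting point. You fix an \emph{arbitrary} vertex $w$, colour it, and then try to tame the (potentially many) vertices of $N_2(w)$ that still have full lists. You acknowledge that bounding the number of branches here is ``the main obstacle'', and indeed you never establish such a bound: the assertion that the size-$3$ vertices ``fall into a bounded number of classes that behave like a single vertex'' is unsupported, and your appeal to ``a set of size $\le 2$ dominating each neighbourhood'' is the \emph{almost} claw-free condition, not the quasi-claw-free one, so that line of argument is not available here.

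The paper's proof avoids this entirely by reversing the order of choices. Instead of fixing $w$ first, it picks any two non-adjacent vertices $u,v$ (they exist since diameter $=2$), and \emph{then} invokes quasi-claw-freeness to obtain a common neighbour $w$ such that $\{u,v\}$ dominates $N(w)\setminus\{u,v\}$. Now branch over the at most $3^3$ colourings of $\{u,v,w\}$. Every vertex of $N(w)$ is adjacent to $w$ and to at least one of $u,v$, hence sees two coloured vertices and has its colour forced (or the branch is discarded). Since $G$ has diameter $\le 2$, the set $N(w)$ dominates $V(G)$, so every remaining vertex has list size at most~$2$; apply Theorem~\ref{t-2sat}. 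That is the whole argument: a single application of the definition yields the right $w$, and three guessed colours suffice. Your step~(ii) only shrinks lists on $N(w)$ to size $\le 2$, whereas the paper's choice of $w$ lets the three guesses determine $N(w)$ \emph{uniquely}, which is exactly what makes the diameter-$2$ domination finish the job without any further branching.
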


\begin{proof}
Let $G=(V,E)$ be a quasi-claw-free graph of diameter at most~$2$ that has a list $3$-assignment $L$. 
Note that if $G$ is a complete graph on more than three vertices, then $(G,L)$ is a no-instance of {\sc List $3$-Colouring}.
Hence, we may assume without loss of generality that $G$ has diameter~$2$.
This means that $G$ contains two non-adjacent vertices $u$ and $v$ whose common neighbourhood is non-empty.
Then, by definition, there exists a vertex $w$ that is a common neighbour of $u$ and $v$, such that $\{u,v\}$ dominates $N(w)$. 

We consider every possible $3$-colouring $c$ of $G[\{u,v,w\}]$ with $c(u)\in L(u)$, $c(v)\in L(v)$ and $c(w)\in L(w)$. Note that $c(u)\neq c(w)$ and $c(v)\neq c(w)$ and every neighbour $w'$ of $w$ not in $\{u,v\}$ is adjacent to at least one of $u$, $v$. Hence, such a vertex $w'$ can be coloured with at most one colour. If there is no colour available for $w'$ or the only colour available is not in $L(w)$, then we discard $c$ and try another $3$-colouring of $G[\{u,v,w\}]$. Hence, we can extend $c$ to at most one $3$-colouring of $G[N(w)\cup \{w\}]$ that respects $L$. Suppose the latter is possible. As $G$ has diameter at most~$2$, we find that $N(w)$ dominates $V$. Hence, for every uncoloured vertex of $G$ we have at most two available colours left. This means that we obtained an instance of {\sc $2$-List Colouring}. The latter is solvable in polynomial time by Theorem~\ref{t-2sat}.  As there are at most $3^3$ possible $3$-colourings of $G[\{u,v,w\}]$, we conclude that our algorithm runs in polynomial time. 
\end{proof}

For almost $K_{1,r}$-free graphs of diameter at most $d$ we can give a stronger result.
 
\begin{theorem}\label{t-constant}
For all integers $d,k,r\geq 1$, {\sc List $k$-Colouring} is constant-time solvable for almost $K_{1,r}$-free graphs of diameter at most~$d$.
\end{theorem}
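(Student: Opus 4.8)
The plan is to prove that every list-$k$-colourable almost $K_{1,r}$-free graph of diameter at most $d$ has at most $N_0$ vertices for some constant $N_0=N_0(d,k,r)$; once this is established the algorithm is trivial. On input $(G,L)$ the algorithm checks whether $G$ has more than $N_0$ vertices: if so it reports ``no'' (this is correct, since such a $G$ is not even $k$-colourable and hence certainly not list-$k$-colourable), and otherwise it tries all at most $k^{N_0}$ maps $V(G)\to\{1,\dots,k\}$ and accepts iff one of them is a proper colouring respecting $L$. Both branches take constant time. Since we argue directly about the instance and never delete vertices, the fact noted in Section~\ref{s-focus} that ``diameter at most $d$'' need not define a hereditary class is harmless here. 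Moreover, almost $K_{1,r}$-freeness is defined only for $r\geq 3$; an almost $K_{1,1}$-free graph is edgeless, and an almost $K_{1,2}$-free graph is easily seen to be almost $K_{1,3}$-free, so we may assume $r\geq 3$.

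The crux is a bound on the maximum degree of $G$. Since $G$ is list-$k$-colourable it is $k$-colourable, so $\omega(G)\leq k$, and therefore $\omega(G[N(u)])\leq k-1$ for every vertex $u$. I claim that $\alpha(G[N(u)])$ is bounded in terms of $r$ alone. Suppose $I\subseteq N(u)$ is an independent set with $|I|\geq r$; then $u$ is the centre of an induced $K_{1,r}$ (use any $r$ vertices of $I$ as leaves). By the second defining property there is a set $D\subseteq N(u)$ with $|D|\leq r-1$ that dominates $N(u)$. Because $I$ is independent, each vertex of $I\setminus D$ is adjacent to a vertex of $D\setminus I$, so by the pigeonhole principle some $w\in D\setminus I$ is adjacent to at least $(|I|-(r-1))/(r-1)$ vertices of $I$. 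If $|I|\geq r^2$ this quantity is at least $r$, and since those neighbours of $w$ form an independent set not containing $w$, the vertex $w$ is itself the centre of an induced $K_{1,r}$. But $w\in N(u)$, so $u$ and $w$ are adjacent centres of induced $K_{1,r}$s, contradicting the first defining property. Hence $\alpha(G[N(u)])<r^2$. Combining $\alpha(G[N(u)])<r^2$ with $\omega(G[N(u)])\leq k$ and Ramsey's theorem (the constant $R(\cdot,\cdot)$ of Section~\ref{s-pre}) gives $|N(u)|<R(k,r^2)=:\Delta_0$, so $G$ has maximum degree less than $\Delta_0=\Delta_0(k,r)$.

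Finally, a graph of diameter at most $d<\infty$ is connected, and in a connected graph of maximum degree at most $\Delta_0$ and diameter at most $d$ every vertex lies within distance $d$ of any fixed vertex, so $|V(G)|\leq 1+\Delta_0+\Delta_0(\Delta_0-1)+\dots+\Delta_0(\Delta_0-1)^{d-1}=:N_0$; this $N_0$ depends only on $d,k,r$, which finishes the argument. I expect the only delicate step to be the bound on $\alpha(G[N(u)])$: this is precisely where the two defining properties of almost $K_{1,r}$-free graphs are used in tandem---the domination property converts a large independent set inside a neighbourhood into a second $K_{1,r}$-centre, and independence of the centres then yields a contradiction. Everything else---the Ramsey estimate from bounded clique and independence numbers to bounded degree, and the ball-growth estimate from bounded degree and diameter to bounded order---is routine.
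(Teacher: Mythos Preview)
Your proof is correct and follows essentially the same strategy as the paper: bound the clique number in each neighbourhood via $k$-colourability, bound the independence number in each neighbourhood by using the dominating set of size at most $r-1$ together with pigeonhole to produce a second $K_{1,r}$-centre adjacent to $u$, then apply Ramsey to bound the maximum degree and use the diameter bound to bound $|V(G)|$. The only differences are cosmetic---you use the threshold $r^2$ where the paper uses $r(r-1)$, and your pigeonhole step treats $I\cap D$ a bit more carefully than the paper does---and there is a harmless slip where you write $\omega(G[N(u)])\le k$ after having (correctly) noted $\omega(G[N(u)])\le k-1$.
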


\begin{proof}
Let $G=(V,E)$ be a $K_{1,r}$-free graph of diameter at most~$d$ with a list $k$-assignment~$L$. We prove that if $G$ has size larger than some constant~$\beta(k,r)$, which we determine below, then $G$ is not $k$-colourable. Hence, in that case, $(G,L)$ is a no-instance of {\sc List $k$-Colouring}.
If $|V(G)|\leq \beta(k,r)$, we can solve {\sc List $k$-Colouring} in constant time on input $(G,L)$.

Let $u\in V$. If $N(u)$ contains a clique of size $k$, then $G$ is not $k$-colourable, and hence, $(G,L)$ is a no-instance. So, by Ramsey's Theorem, we may assume that $N(u)$ contains an independent set $I(u)$ of size $(r-1)r$ if $|N(u)|\geq R(k,(r-1)r)$. By the second property of the definition of almost $K_{1,r}$-freeness, $N(u)$ contains a set $D(u)$ of size at most $r-1$ that dominates $N(u)$, and thus also dominates $I(u)$. Then, by the Pigeonhole principle, $D(u)$ contains a vertex $v$ that is adjacent to at least $r$ vertices of $D(u)$. However, now $G$ contains two adjacent centres of induced $K_{1,r}$s, namely $u$ and $v$. This violates the first property of the definition of almost $K_{1,r}$-freeness. 

From the above, we conclude that in order for $(G,L)$ to be a yes-instance of {\sc List $k$-Colouring}, every $u\in V$ must have degree less than $R(k,(r-1)r)$, so the number of vertices of $G$ must be at most $\beta(k,r)=1+ R(k,(r-1)r) + R(k,(r-1)r)^2 +\ldots+  R(k,(r-1)r)^d$.
\end{proof}

We can strengthen Theorem~\ref{t-constant} for the case where $d=2$, $k=3$, and $r=3$. For proving this result we need the following lemma.

\begin{lemma}[\cite{MPS21}]\label{c5}
{\sc List $3$-Colouring} can be solved in polynomial time for $C_5$-free graphs of diameter at most~$2$.
\end{lemma}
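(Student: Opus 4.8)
The plan is to reduce, in polynomial time, to polynomially many instances of \textsc{$2$-List Colouring} and then invoke Edwards' theorem (Theorem~\ref{t-2sat}). Since every graph of diameter at most~$2$ is connected, and since the problem is trivial when $G$ is complete, we may assume $G$ is connected with at least two non-adjacent vertices. Let $v$ be a vertex of maximum degree~$\Delta$. If $\Delta$ is bounded by a constant~$c$, then, since $G$ has diameter at most~$2$, we have $|V(G)|\le 1+\Delta+\Delta(\Delta-1)\le 1+c^2$, so we can solve the instance by brute force in constant time. Hence we may assume $\Delta$ is large.

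In the remaining case we first guess the colour $c(v)\in L(v)$ (at most three options). Deleting $c(v)$ from the list of every vertex in $N(v)$ gives each such vertex a list of size at most~$2$. Since colouring $v$ forbids $c(v)$ on all of $N(v)$, a necessary condition for a solution is that $G[N(v)]$ is $2$-colourable, i.e.\ bipartite, and in particular triangle-free; if $G[N(v)]$ is not bipartite then $(G,L)$ is a no-instance. Set $M = V\setminus N[v]$; as $G$ has diameter at most~$2$, $M$ is dominated by $N(v)$, so once every vertex of $N(v)$ has been coloured, every vertex of $M$ has a list of size at most~$2$ and what remains is a \textsc{$2$-List Colouring} instance on $G[M]$. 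The crux is therefore to colour $N(v)$ the ``right'' way. I claim that, because $G$ is $C_5$-free, the colouring of $N(v)$ is determined, up to a constant number of possibilities, by the colouring of a constant-size subset $S\subseteq N(v)$ together with propagation of forced colours: for a yes-instance $G[N(v)]$ is a triangle-free bipartite graph, so colouring it amounts to orienting the (essentially unique) bipartition of each connected component subject to the size-$\le 2$ lists; one then shows, using $C_5$-freeness, that only a bounded number of components are ``relevant'' (the others being list-forced or irrelevant to $M$), that every vertex of $M$ whose neighbourhood in $N(v)$ meets two parts receiving different colours is itself forced, and that the remaining vertices of $M$ drop to a $2$-list after the colours of $S$ have been propagated. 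With the colouring of $N(v)$ pinned down to $O(1)$ options, each option yields a \textsc{$2$-List Colouring} instance solved by Theorem~\ref{t-2sat}, and only $O(1)$ guesses are made in total, so the algorithm runs in polynomial time.

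The main obstacle is exactly the structural statement in the previous paragraph: quantifying how $C_5$-freeness limits the interaction between $M$ and the components of $G[N(v)]$. The key tool I would use is that a ``long'' interaction produces a forbidden induced $C_5$: if $m,m'\in M$ are adjacent and witnessed in $N(v)$ by vertices $a\in N(m)\cap N(v)$ and $b\in N(m')\cap N(v)$ that are as non-adjacent as possible (with $a\neq b$, $a\not\sim m'$, $b\not\sim m$ and $a\not\sim b$), then $v\,a\,m\,m'\,b\,v$ is an induced $C_5$, a contradiction; iterating and combining such arguments should force $N(v)$, and hence its attachment to $M$, into a boundedly complex shape. I would expect the cleanest route to be to handle the $C_4$-free case first (there any two non-adjacent vertices have a unique common neighbour, so $G[N(v)]$ and the attachment of $M$ are very restricted) and then to analyse the induced $C_4$'s that may occur, showing that forbidding $C_5$ as well confines them to controlled configurations; a direct case analysis along the lines above should also work.
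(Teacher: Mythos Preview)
The paper does not prove this lemma; it is imported from~\cite{MPS21} and used as a black box in later proofs, so there is no in-paper argument to compare your attempt against.

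Judged on its own, your proposal is an outline with a self-acknowledged gap rather than a proof. The decisive step is the assertion that, after fixing $c(v)$, $C_5$-freeness pins down the $2$-colouring of $G[N(v)]$ up to $O(1)$ choices because ``only a bounded number of components are relevant''. You do not prove this, and the single structural fact you supply does not yield it: your observation (adjacent $m,m'\in M$ with non-adjacent private witnesses $a,b\in N(v)$ give an induced $C_5$ on $v,a,m,m',b$) constrains only \emph{adjacent} pairs in $M$. It places no restriction on a single vertex $m\in M$ that has one neighbour in each of arbitrarily many components of $G[N(v)]$ --- two such neighbours $a_1,a_2$ together with $v$ and $m$ induce a $C_4$, not a $C_5$ --- and flipping any one of those components changes which two colours remain available to $m$, so none of those components is ``irrelevant'' in your sense. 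Nor does the observation bound, over all $m\in M$, the total number of components that matter. The suggested fallback (``handle the $(C_4,C_5)$-free case first, then analyse the induced $C_4$'s'') is not carried out, and even the $(C_4,C_5)$-free subcase is nontrivial once triangles are permitted (Theorem~\ref{t-all} covers only girth $\geq 5$). As written, the reduction to $O(1)$ guesses followed by \textsc{$2$-List Colouring} is not established.
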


We can now prove the following. 

\begin{theorem}\label{t-extended0}
{\sc List $3$-Colouring} can be solved in polynomial time for 
graphs of diameter at most~$2$, in which the centre vertices of its induced claws form an independent set.
\end{theorem}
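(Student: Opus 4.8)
The plan is to use the hypothesis together with the diameter constraint to locate a dominating set of bounded size and then reduce to {\sc $2$-List Colouring}, in the style of the proof of Theorem~\ref{t-polynomial}. We may assume that $G$ is connected on at least two vertices, since otherwise {\sc List $3$-Colouring} on $(G,L)$ is trivial: the single-vertex case is immediate, and a disconnected graph has diameter~$\infty$. Then $G$ has an edge, and since the set of centres of induced claws of $G$ is independent, at least one endpoint $v$ of that edge is not the centre of an induced claw of $G$; such a $v$ can be found in polynomial time, because a vertex $x$ is the centre of an induced claw exactly when $G[N(x)]$ contains three pairwise non-adjacent vertices.

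The crucial point is a degree bound on non-centre vertices in yes-instances. Suppose $(G,L)$ admits a colouring $c$ respecting $L$; then $c$ is a proper $3$-colouring of $G$. Every neighbour of $v$ avoids the colour $c(v)$, so $G[N(v)]$ is $2$-colourable and hence bipartite; and since $v$ is not the centre of an induced claw, $G[N(v)]$ has independence number at most~$2$. A bipartite graph with independence number at most~$2$ has both sides of size at most~$2$, so $|N(v)|\le 4$. Hence, if $v$ has degree at least~$5$ we may safely declare $(G,L)$ a no-instance.

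Otherwise the closed neighbourhood $N[v]=\{v\}\cup N(v)$ has size at most~$5$, and because $G$ has diameter at most~$2$, the set $N[v]$ dominates $V$; in particular every vertex outside $N[v]$ has a neighbour in $N(v)$. We branch over the at most $3^{5}$ colourings of $G[N[v]]$ respecting $L$; in each branch we delete from the list of every remaining vertex the colours used on its neighbours in $N[v]$, which leaves every remaining list of size at most~$2$ (if some list becomes empty the branch is discarded), and we solve the resulting {\sc $2$-List Colouring} instance in linear time by Theorem~\ref{t-2sat}, accepting if some branch succeeds. This runs in polynomial time, which proves the theorem. One could alternatively split on whether $G$ is $C_5$-free --- invoking Lemma~\ref{c5} in that case and applying the degree argument to the vertices of an induced $C_5$ otherwise --- but this case distinction is not needed. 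The only step requiring genuine care is the degree bound of the second paragraph, where one must combine two ingredients: the hypothesis forces some non-centre vertex to exist, and a proper $3$-colouring makes the neighbourhood of such a vertex bipartite. Everything afterwards --- the domination argument and the reduction to {\sc $2$-List Colouring} --- is routine and parallels the proof of Theorem~\ref{t-polynomial}.
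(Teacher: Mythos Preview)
Your proof is correct and takes a genuinely different route from the paper's. The paper first splits on whether $G$ is $C_5$-free, invoking Lemma~\ref{c5} in that case; otherwise it fixes an induced $5$-cycle $C$, branches over its colourings, and argues directly from the hypothesis that after propagation no vertex can retain a list of size~$3$ (a vertex $x$ with full list would force two \emph{adjacent} vertices of $C$ to be centres of induced claws). Your argument bypasses Lemma~\ref{c5} entirely: you locate a single non-centre vertex $v$ (guaranteed by the independence of centres once $G$ has an edge), and observe that in any $3$-colourable graph such a vertex has degree at most~$4$, since $G[N(v)]$ must be bipartite with independence number at most~$2$. This either lets you reject outright or gives a dominating set $N[v]$ of size at most~$5$ to branch over. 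Your approach is more self-contained and yields a smaller branching structure (a single closed neighbourhood rather than a $5$-cycle plus an external lemma); the paper's approach avoids the degree bound and instead exploits the cycle structure to derive the contradiction on adjacent centres more combinatorially. Both finish identically by reducing to {\sc $2$-List Colouring} via Theorem~\ref{t-2sat}.
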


\begin{proof}
Let $G$ be a claw-free graph of diameter at most~$2$, such that the centres of the induced claws in $G$ form an independent set. Let $L$ be a list $3$-assignment of $G$. By Lemma~\ref{c5} we may assume that $G$ has an induced cycle $C$  on five vertices $u_1,\ldots,u_5$ in that order. We consider each possible $3$-colouring~$c$ of $C$ that respects $L$. For each vertex $v\in V$ that has a neighbour on $C$, we remove all colours from $L(v)$ that are used on $N(v)\cap V(C)$. If afterwards $L(v)$ has size~$0$ for some $v\in V$, then we discard $c$. Suppose that this does not happen. If $L(v)$ has size~$1$ for some $v$ not on $C$, then we let $c(v)$ be the colour in $L(v)$, and we remove $c(v)$ from the list of each uncoloured neighbour of $v$.

We claim that after applying the above procedure exhaustively, every vertex of $G$ has a list of size at most~$2$. For a contradiction, assume that $x$ is a vertex with $L(x)=\{1,2,3\}$. Then $x$ is not on $C$ and $x$ is not a neighbour of a vertex of $C$ either. As $G$ has diameter~$2$, we find that $x$ has a neighbour $v_1$ adjacent to $u_1$ and a neighbour $v_2$ adjacent to $u_2$. 
If $v_1$ is adjacent to $u_5$ or $u_2$, then $v_1$ must have been given a unique colour (as $u_1,u_2$, and likewise $u_1,u_5$, are consecutive vertices on $C$ and thus they are not coloured alike). By the same argument, $v_2$ is not adjacent to $u_1$ or $u_3$.
In particular, the above implies that $v_1$ and $v_2$ are two distinct vertices. We now observe that $u_1$ and $u_2$ are centres of induced claws with set of leaves $\{u_5,u_2,v_1\}$ and $\{u_1,u_3,v_2\}$, respectively. This is another contradiction.

From the above, we conclude that we have constructed an instance of {\sc $2$-List Colouring}. The latter is solvable in polynomial time by Theorem~\ref{t-2sat}.  As there are at most $3^5$ possible $3$-colourings of $C$, we conclude that our algorithm runs in polynomial time. 
\end{proof}

Our next result shows that we cannot generalize Theorem~\ref{t-extended0} to graphs of diameter at most~$d$ for every $d$.

\begin{theorem}\label{t-extended}
The {\sc $3$-Colouring} problem is \NP-complete for
graphs of diameter at most~$4$, in which the centre vertices of its induced claws form an independent set.
\end{theorem}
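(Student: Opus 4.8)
The plan is to reduce from {\sc $3$-Colouring} on graphs of girth at least $5$, which is \NP-complete by Theorem~\ref{t-girth1}, and to mimic the gadget idea already used in the proof of Theorem~\ref{t-main2}(3): given an arbitrary input graph $G$, attach to each original vertex $v_i$ a pendant vertex $v_{i,1}$, and for every pair $v_i,v_j$ that is ``too far apart'' in $G$, join $v_{i,1}$ and $v_{j,1}$ by a new short induced path (of length $2$ or $3$, say through fresh degree-$2$ vertices) so that the resulting graph $G'$ has diameter at most $4$. The $3$-colourability equivalence is the easy direction and works exactly as in Theorem~\ref{t-main2}(3): an induced $G\subseteq G'$ gives one implication, and for the converse we first colour each pendant $v_{i,1}$ differently from $v_i$ and then extend over the degree-$2$ connector vertices one at a time, since each has only two coloured neighbours to avoid.

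First I would fix the gadget precisely and verify the diameter bound. The natural choice is: for every pair $v_i, v_j$ with $\dist_G(v_i,v_j)\ge 3$ (or whatever threshold makes the arithmetic tightest), add two new vertices $a_{ij}, b_{ij}$ and the path $v_{i,1}\,a_{ij}\,b_{ij}\,v_{j,1}$. Then any two original vertices are joined either by a short path already present in $G$ or through the two pendants and the connector, giving distance at most $5$ between them; one must check that pairs involving the auxiliary vertices $v_{i,1}, a_{ij}, b_{ij}$ do not blow this up, and possibly shave the threshold or the connector length to land on $4$ rather than $5$. This is the same bookkeeping as in the proof of Theorem~\ref{t-main2}(3), so I expect it to go through with the right constant.

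The genuinely new part — and the main obstacle — is the claw-centre condition: I must show that in $G'$ the set of centres of induced claws is independent. The added vertices $a_{ij}, b_{ij}$ have degree $2$, so they are never claw centres and cannot be claw leaves in a way that creates new claws centred elsewhere in a harmful configuration; a pendant $v_{i,1}$ has all its neighbours ($v_i$ plus some $a_{ij}$'s) pairwise non-adjacent, but $v_{i,1}$ has degree $\ge 3$ only when there are at least two far pairs through it, so $v_{i,1}$ \emph{can} be a claw centre. The key point to establish is that whenever $v_{i,1}$ is a claw centre, none of its neighbours is a claw centre: its neighbours are $v_i$ and various $a_{ij}$, and the $a_{ij}$ have degree $2$ hence are not centres, so it suffices to ensure $v_i$ and $v_{i,1}$ are not both centres. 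To control this I would, if necessary, subdivide each pendant edge once more or add a small clique-like buffer so that no original vertex $v_i$ is the centre of an induced claw in $G'$ at all — e.g.\ make the pendant attachment a triangle rather than a single edge, or attach two mutually adjacent pendants, so that the neighbourhood of $v_i$ outside $G$ is a clique and the only possible claw centres are the pendants, which are pairwise non-adjacent. Getting this buffering right while preserving both the diameter bound and the $3$-colouring equivalence (a triangle attached to $v_i$ still $3$-colours fine, since $v_i$ forbids only one colour and the two buffer vertices can take the other two) is where the care lies; once the buffer guarantees that all claw centres of $G'$ lie in the independent set of pendant vertices, the theorem follows.
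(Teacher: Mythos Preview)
Your approach has a fundamental gap in the claw-centre condition that no amount of buffering will fix. You start from a graph $G$ of girth at least $5$. In such a graph, every vertex of degree at least $3$ is automatically the centre of an induced claw: any three of its neighbours are pairwise non-adjacent, since an edge between two of them would create a triangle, contradicting girth $\geq 5$. The hard instances for $3$-{\sc Colouring} have many vertices of degree $\geq 3$, and these are in general adjacent to one another, so the original graph $G$ itself already has adjacent claw centres. Your proposed buffers only tinker with the neighbourhood of $v_i$ \emph{outside} $G$; the three pairwise non-adjacent neighbours of $v_i$ \emph{inside} $G$ still form a claw centred at $v_i$, and this claw survives into $G'$ no matter what pendants, triangles, or clique-buffers you attach. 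You would have to destroy claws that are entirely contained in $G$, which you cannot do without altering $G$ in a way that affects $3$-colourability.

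The paper's proof sidesteps this by reducing from $3$-{\sc Colouring} on \emph{claw-free} graphs, which is \NP-complete by Theorem~\ref{t-claw}. Given a claw-free $G$, compute a maximal independent set $S$ (greedily), and for every pair $u,v\in S$ add a new vertex $x_{uv}$ adjacent only to $u$ and $v$. The resulting $G'$ has diameter at most $4$: every vertex of $G$ is within distance $1$ of $S$ (maximality), and any two vertices of $S$ are at distance $2$ via some $x_{uv}$. Each $x_{uv}$ has degree $2$, so it is never a claw centre; each vertex of $V(G)\setminus S$ has unchanged neighbourhood, so it is never a claw centre by claw-freeness of $G$; hence all claw centres of $G'$ lie in the independent set $S$. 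The $3$-colouring equivalence is immediate since each $x_{uv}$ has only two neighbours. The key idea you are missing is to start from a graph class that is already claw-free, so that the only claws in $G'$ are the ones you deliberately create, and you can arrange their centres to be independent from the outset.
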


\begin{proof}
We reduce from $3$-{\sc Colouring} restricted to claw-free graphs. This problem is \NP-complete by Theorem~\ref{t-claw}. 
Let $G$ be a claw-free graph. Let $S$ be a maximal independent set of $G$. Note that we can find such a set~$S$ in polynomial time by a greedy approach.
For every pair of vertices $u,v$ that belong to $S$, we introduce a new vertex $x_{uv}$, and we make $x_{uv}$ adjacent only to  $u$ and $v$. Let $G'$ be the resulting graph. We note that $G'$ has diameter at most~$4$. Moreover, every centre of an induced claw in $G'$ belongs to the independent set $S$. It remains to observe that $G$ has a $3$-colouring if and only if $G'$ has a $3$-colouring.
\end{proof}

\noindent
We note that for locally claw-free graphs, which form a superclass of the class of graphs in Theorem~\ref{t-extended}, a stronger result holds. Namely, as triangle-free graphs are locally claw-free, {\sc $3$-Colouring} is \NP-complete for locally claw-free graphs of diameter~$3$, due to Theorem~\ref{t-ms16}. 

\medskip
\noindent
We now return to Theorem~\ref{t-constant} again.
If $k$ is not part of the input, Theorem~\ref{t-constant} no longer holds. This is shown by our next theorem.
In this theorem we assume that $H\not\ssi P_1+P_3$ and $H\not \ssi P_4$, as in those cases {\sc Colouring} is polynomial-time solvable for all $H$-free graphs due to Theorem~\ref{t-dichotomy}.
Note that Theorem~\ref{t-col} covers all remaining cases except the case where $H=K_{1,3}$.

\begin{theorem}\label{t-col}
Let $H$ be a graph with $H\not\ssi P_1+P_3$ and $H\not \ssi P_4$ and $d$ be an integer. Then {\sc Colouring} for $H$-free graphs of diameter at most~$d$ is
\begin{enumerate}
\item \NP-complete if $H$ has no dominating vertex $u$ such that $H-u\ssi P_1+P_3$ or $H-u\ssi P_4$ and $d\geq 2$;
\item \NP-complete if $H\neq K_{1,3}$ and $H$ has a dominating vertex $u$ such that $H-u\ssi P_1+P_3$ or $H-u\ssi P_4$ and $d\geq 3$.
\end{enumerate}
\end{theorem}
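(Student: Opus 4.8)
The plan is to treat the two statements separately; in both I would reduce from a colouring problem that is already known to be \NP-complete and attach a small ``hub'' to the instance to bound its diameter, checking that no induced copy of $H$ is created.

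\medskip
\noindent
\textbf{Statement 1.} I would reduce from {\sc Colouring} restricted to $H'$-free graphs for a carefully chosen graph $H'$ with $H'\not\ssi P_1+P_3$ and $H'\not\ssi P_4$; by Theorem~\ref{t-dichotomy} this restriction is \NP-complete. The choice of $H'$ is the only point needing thought. The set $D$ of dominating vertices of $H$ consists of vertices adjacent to every other vertex of $H$, so $D$ induces a clique and deleting any single vertex of $D$ yields the same graph up to isomorphism. I therefore set $H'\df H-u$ for an arbitrary $u\in D$ when $D\neq\emptyset$, and $H'\df H$ when $D=\emptyset$; in both cases the hypotheses of Statement~1 (the assumptions on $H$ itself when $D=\emptyset$, the ``no nice dominating vertex'' assumption when $D\neq\emptyset$) give $H'\not\ssi P_1+P_3$ and $H'\not\ssi P_4$. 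Given an $H'$-free graph $G$ — which is automatically $H$-free, since $H'$ is an induced subgraph of $H$ — I form $G^\ast$ by adding one new vertex $w$ adjacent to every vertex of $G$. Then $G^\ast$ has diameter at most~$2$, and $G$ is $k$-colourable if and only if $G^\ast$ is $(k+1)$-colourable, because $w$ is adjacent to all of $V(G)$ and so uses a colour forbidden throughout $G$. Finally, $G^\ast$ is $H$-free: an induced copy of $H$ avoiding $w$ would lie in $G$, contradicting $H$-freeness of $G$; an induced copy containing $w$ would have $w$ adjacent to all the other copy-vertices, so $H$ would have a dominating vertex $u_0$ played by $w$, whence $u_0\in D$ and deleting $w$ would exhibit an induced copy of $H-u_0\cong H'$ inside $G$, contradicting $H'$-freeness of $G$. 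Since $d\geq 2$, this proves Statement~1.

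\medskip
\noindent
\textbf{Statement 2.} The key observation is that every graph $H$ meeting the hypotheses contains a triangle. Indeed $H$ has a dominating, hence universal, vertex $u$. If the set $V(H)\setminus\{u\}$ were independent, then $H$ would be the star $K_{1,m}$ with $m=|V(H)|-1$ and $H-u=\overline{K_m}$; but $\overline{K_m}\ssi P_1+P_3$ or $\overline{K_m}\ssi P_4$ (one of which holds by hypothesis) forces $m\leq 3$, so $H\in\{K_1,K_2,P_3,K_{1,3}\}$, and each of these is excluded --- the first three because $H\not\ssi P_4$, and $K_{1,3}$ by the assumption $H\neq K_{1,3}$. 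Hence $V(H)\setminus\{u\}$ contains an edge $xy$, and $\{u,x,y\}$ induces a $C_3$ in $H$. Consequently every $C_3$-free graph is $H$-free (an induced copy of $H$ would contain this induced $C_3$), so by Theorem~\ref{t-ms16} {\sc $3$-Colouring}, and therefore {\sc Colouring}, is \NP-complete already for $H$-free graphs of diameter~$3$; as $d\geq 3$, it is \NP-complete for $H$-free graphs of diameter at most~$d$.

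\medskip
\noindent
The only genuinely non-routine ingredient is the structural remark in Statement~1 that all single-vertex deletions of dominating vertices of $H$ are isomorphic, which lets one fixed hardness source ($H'$-free graphs) drive the reduction even when $H$ has several dominating vertices. The diameter estimate, the colour bookkeeping, the $H$-freeness case analysis, and the triangle observation of Statement~2 are all immediate.
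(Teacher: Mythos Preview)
Your proof is correct and follows essentially the same approach as the paper. For Statement~1 you give the same reduction (add a universal vertex to an $H'$-free instance), though your justification that $G^\ast$ is $H$-free is more detailed than the paper's --- your observation that all dominating-vertex deletions of $H$ are isomorphic is exactly what makes the case analysis go through, and the paper leaves this implicit. For Statement~2 both proofs show that $H$ contains an induced $C_3$ and then invoke Theorem~\ref{t-ms16}; the paper phrases this as ``$H$ is not a forest, hence contains an induced $C_r$'' and then also treats a case $r\geq 4$ (handled via Statement~1 applied to $C_r$), but under the hypotheses of Statement~2 that case is in fact vacuous since $H-u$ is a forest and every induced cycle through the dominating vertex $u$ is a triangle --- so your direct triangle argument is marginally cleaner.
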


\begin{proof}
{\bf 1.} Let $H$ have no dominating vertex $u$ such that $H-u\ssi P_1+P_3$ or $H-u\ssi P_4$. We define $H'$ as $H-u$ if $H$ has a dominating vertex~$u$ and as $H$ itself otherwise. By construction, $H'\not\ssi P_1+P_3$ and $H'\not\ssi P_4$. Hence, {\sc Colouring} is \NP-complete for $H'$-free graphs due to Theorem~\ref{t-dichotomy}. Let $G$ be an $H'$-free graph. Add a dominating vertex to $G$. The new graph $G'$ has diameter~2 and is $H$-free. Moreover, $G$ is $k$-colourable if and only if $G'$ is $(k+1)$-colourable.

\medskip
\noindent
{\bf 2.}  Let $H\neq K_{1,3}$ have a dominating vertex $u$ such that $H-u\ssi P_1+P_3$ or $H-u\ssi P_4$.
Then $H$ cannot be a forest, as in that case $H$ would be in $\{P_1,P_2,P_3,K_{1,3}\}$.
Hence, $H$ has an induced cycle $C_r$ for some $r\geq 3$.
If $r=3$, then $3$-{\sc{Colouring}} is \NP-complete for $H$-free graphs of diameter~3, as it is so for $C_3$-free graphs of diameter~3 due to Theorem~\ref{t-ms16}. If $r\geq 4$, then {\sc Colouring} is \NP-complete even for $H$-free graphs of diameter~$2$, as it is so for $C_r$-free graphs of diameter~$2$ due to {\bf 1}. 
\end{proof}

It is a natural question whether we can extend Theorem~\ref{t-constant} to $H$-free graphs of diameter~$d$, where $H$ is a slightly larger tree than a star~$K_{1,r}$. The first interesting case is where $H$ is an $\ell$-subdivided star $K_{1,r}^\ell$ for some integer $\ell\geq 1$ and $r\geq 3$. We prove a number of results for various values of $d$,$k$ and $\ell$.
We exclude the cases that are tractable in general, namely where $d=1$; or $k\leq 2$; or $\ell = 1$ and $r\leq 2$; the latter case corresponds to the case where $H=K_{1,2}^1=P_4$, so we can use Theorem~\ref{t-dichotomy}. 
We also observe that for $k\geq 4$ all interesting cases are \NP-complete due to Theorem~\ref{t-claw} (see also Case~4 of Theorem~\ref{s112}).
However, for $k=3$ the situation is less clear.

For our results we need Lemma~\ref{c5} and the following lemma.

\begin{lemma}\label{l-cycle}
Let $d\geq 1$.
Let $G$ be a graph of diameter~$d$ that is not a tree.
If $G$ is bipartite, then the girth of $G$ is at most $2d$.
If $G$ is non-bipartite, then the girth of $G$ is at most $2d+1$ and $G$ contains an odd cycle of length at most $2d+1$.
\end{lemma}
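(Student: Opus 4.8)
The plan is to take a BFS tree (shortest-path tree) rooted at a carefully chosen vertex and then exploit a non-tree edge. First I would observe that since $G$ is not a tree, $G$ contains at least one cycle, so $G$ has a non-tree edge with respect to any spanning tree; in particular $G$ has an edge $uv$ that does not lie on some fixed shortest-path (BFS) tree $T$ rooted at a vertex $s$. The girth of $G$ is then bounded by the length of the fundamental cycle created by $uv$, which is $\dist_T(s,u) + \dist_T(s,v) + 1 = \dist_G(s,u) + \dist_G(s,v) + 1$. Since $G$ has diameter $d$, each of these two distances is at most $d$, giving a cycle of length at most $2d+1$, so the girth is at most $2d+1$. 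This already handles the weak bound; the work is in tightening it in the bipartite case and producing an \emph{odd} cycle in the non-bipartite case.

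For the bipartite case, the key point is parity: if $G$ is bipartite with parts $A$ and $B$, then for the non-tree edge $uv$ its endpoints lie in different parts, so $\dist_G(s,u)$ and $\dist_G(s,v)$ have opposite parities; hence $\dist_G(s,u)+\dist_G(s,v)$ is odd and at most $2d-1$, yielding a fundamental cycle of even length at most $2d$. (Alternatively: every cycle in a bipartite graph is even, and an odd-length candidate cycle of length $2d+1$ can be shortchained by one.) So the girth is at most $2d$.

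For the non-bipartite case I would argue as follows. Pick a shortest odd closed walk, or equivalently a shortest odd cycle $C$ in $G$; such a cycle exists precisely because $G$ is non-bipartite. I claim $C$ has length at most $2d+1$. Suppose for contradiction $C$ has length at least $2d+3$. Pick two vertices $x,y$ on $C$ that split $C$ into two arcs of lengths $\ell_1 \le \ell_2$ with $\ell_1 + \ell_2 = |C|$ and $\ell_1 \ge d+1$ (this is possible since $|C| \ge 2d+3$ means we can choose $x,y$ so that the shorter arc already has more than $d$ edges, or more cleanly choose them roughly antipodally so both arcs have length $\ge d+1$). Since $\dist_G(x,y) \le d < \ell_1 \le \ell_2$, there is an $x$–$y$ path $P$ in $G$ of length at most $d$, strictly shorter than either arc. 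Replacing one arc of $C$ by $P$ gives a shorter closed walk; since $\ell_1 + \ell_2$ is odd, one of $\ell_1, \ell_2$ has parity opposite to $|P|$, so choosing that arc to replace produces a \emph{closed walk of odd length} strictly shorter than $|C|$, hence a shorter odd cycle — contradicting minimality of $C$. Therefore $|C| \le 2d+1$, which simultaneously shows the girth is at most $2d+1$ and exhibits the required odd cycle.

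The main obstacle is the parity bookkeeping in the non-bipartite shortcut argument: one must make sure that after replacing an arc of the shortest odd cycle by a geodesic, the resulting closed walk is still odd, and that it is strictly shorter, and then extract an actual odd cycle from an odd closed walk (standard: any odd closed walk contains an odd cycle). Choosing the two vertices $x,y$ on $C$ so that \emph{both} arcs exceed the diameter (forcing the geodesic to be a genuine shortcut regardless of which arc we remove) is the cleanest way to avoid a tedious case split; this is exactly where the assumption $|C| \ge 2d+3$ (rather than $\ge 2d+2$) is used, which is consistent with the target bound $2d+1$.
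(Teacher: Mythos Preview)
Your overall strategy is correct and close to the paper's: both proofs shortcut a too-long (odd) cycle using a geodesic between two far-apart vertices on it, and both use the parity of the two arcs to keep the resulting closed walk odd. Your BFS/fundamental-cycle setup for the general girth bound is a pleasant variant of the paper's direct ``pick a cycle and chord it'' argument, but note a small misstatement: the fundamental cycle of a non-tree edge $uv$ in a BFS tree $T$ rooted at $s$ has length $\dist_T(u,v)+1$, not $\dist_T(s,u)+\dist_T(s,v)+1$; the latter is only an upper bound (equality requires $s$ to be the lowest common ancestor of $u,v$). Since you only need an upper bound this does no harm, but the ``which is'' should be ``which is at most''.

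There is one genuine bookkeeping slip in the non-bipartite case. With arcs of lengths $\ell_1,\ell_2$ (of opposite parities, since $\ell_1+\ell_2$ is odd) and a geodesic $P$, replacing the arc of length $\ell_i$ yields a closed walk of length $|P|+\ell_{3-i}$. For this to be odd you must \emph{keep} the arc whose parity is opposite to $|P|$, i.e.\ \emph{replace the arc with the same parity as $|P|$}; you wrote the opposite. With that correction (and your observation that both arcs exceed $d\geq |P|$, so the walk is strictly shorter regardless of which arc is removed), the argument goes through exactly as intended and matches the paper's iterative shortcut proof.
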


\begin{proof}
As $G$ is not a tree and $G$ is connected, $G$ must contain a cycle $C$. Suppose that $C$ has 
length at least $2d+2$. Since $G$ has diameter~$d$, there exists a path $P$ of length at most~$d$ in $G$ between any two vertices $u$ and $v$ at distance $d+1$ in $C$. The vertices of $P$, together with the vertices of the path of length at most $d+1$ between $u$ and $v$ on $C$, induce a subgraph of $G$ that contains an induced cycle $C'$ of length at most~$2d+1$.
Hence $G$ has girth at most $2d+1$.
If $G$ is bipartite, then $C'$ has length at most $2d$, and thus $G$ has girth at most $2d$.

We now assume that $G$ is a non-bipartite graph. Then $G$ must contain an odd cycle~$C$.
Suppose that $C$ has odd length at least $2d+3$. As before, there exists a path~$P$ of length at most $d$ in $G$ between any two vertices $u$ and $v$ at distance $d+1$ in $C$. If the cycle formed by the vertices of $P$ together with the vertices of the path of length $d+1$ between $u$ and $v$ in $C$ is odd we have an odd cycle of length at most $2d+1$. Otherwise we consider the longer path between $u$ and $v$ in $C$. The vertices of this path together with the vertices of $P$ induce an odd cycle shorter than $C$. By repeating this process we obtain an odd cycle of length at most $2d+1$.
\end{proof}

\noindent
We can now state and prove the following result.

\begin{theorem}\label{s112}
Let $d,k,\ell, r$ be four integers with $d\geq 2$, $k\geq 3$, $\ell\geq 1$ and $r\geq 3$. Then for $K_{1,r}^\ell$-free graphs of diameter at most~$d$, the following holds:
\begin{enumerate}
\item {\sc List $k$-Colouring} is polynomial-time solvable if $d\geq 2$, $k=3$, $\ell=1$ and $r=3$
\item{\sc List $k$-Colouring} is polynomial-time solvable if $d=2$, $k=3$, $\ell=2$ and $r\geq 3$
\item {\sc $k$-Colouring} is \NP-complete if $d\geq 4$, $k=3$, $\ell\geq 3$ and $r\geq 4$
\item {\sc $k$-Colouring} is \NP-complete if $d\geq 2$, $k\geq 4$, $\ell\geq 1$ and $r\geq 3$.
\end{enumerate}
\end{theorem}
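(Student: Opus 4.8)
The plan is to handle the four cases separately, reusing the machinery already developed in the section. For Case~1, the graph $K_{1,3}^1$ is the chair $S_{1,1,2}$, so a $K_{1,3}^1$-free graph has no induced chair. I would first argue that in a graph of diameter at most~$2$ with no induced chair, the set of centres of induced claws is independent: if $u$ and $v$ are adjacent claw centres, then since each has three pairwise-nonadjacent neighbours, one can locate a chair using the edge $uv$ together with suitable private/non-private neighbours (a short case analysis on how the leaf-sets of the two claws interact, distinguishing whether $v$ is a leaf of $u$'s claw and vice versa). Once this is established, Theorem~\ref{t-extended0} applies directly and gives a polynomial-time algorithm for \textsc{List $3$-Colouring}. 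For diameter $d>2$ one must be slightly more careful because the chair-free structure no longer forces diameter-$2$ arguments; here I would instead observe that a chair-free graph of bounded diameter that is $3$-colourable must have bounded size, mimicking the Ramsey argument of Theorem~\ref{t-constant} (a high-degree vertex whose neighbourhood contains a large independent set, dominated by few vertices because chair-freeness restricts the independence number inside neighbourhoods, yields two adjacent claw centres and hence an induced chair). The main obstacle in Case~1 is exactly this structural lemma that chair-freeness plus a colouring/degree bound forces the claw centres to behave well; I expect it to be the most delicate part.

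For Case~2, the graph $K_{1,r}^2$ contains an induced $P_5$ only through its subdivided arm, so I would show that a $K_{1,r}^2$-free graph of diameter at most~$2$ is either $C_5$-free or has a very restricted structure. Concretely: if $G$ contains an induced $C_5$, then because $G$ has diameter~$2$ every vertex is within distance~$2$ of the cycle, and $K_{1,r}^2$-freeness (no path of length~$2$ dangling off a high-degree vertex) bounds how vertices can attach to the cycle and to each other; from this I would derive that either $G$ has bounded size or the instance reduces, after fixing a colouring of a $C_5$, to a \textsc{$2$-List Colouring} instance as in the proof of Theorem~\ref{t-extended0}. When $G$ is $C_5$-free, Lemma~\ref{c5} finishes the job immediately. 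The key point to nail down is that an induced $C_5$ together with diameter~$2$ and $K_{1,r}^2$-freeness cannot coexist with large size, which again is a Ramsey-flavoured argument.

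For Case~3, I would give a reduction from \textsc{$3$-Colouring} on graphs of large girth, which is \NP-complete by Theorem~\ref{t-girth1}, exactly in the spirit of the proof of Theorem~\ref{t-main2}(3): starting from a graph $G$ of sufficiently large girth, attach pendant paths of length~$\ell$ (or join up far-apart vertices by paths of controlled length) to force the diameter down to at most~$4$ while ensuring that no induced $K_{1,4}^3$ appears. The girth hypothesis on the source instance guarantees that any vertex of $G'$ of degree at least~$4$ cannot have a $3$-times-subdivided arm attached without creating a short cycle, so $G'$ is $K_{1,4}^3$-free; the pendant-path gadget preserves $3$-colourability because the added vertices have degree~$2$. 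The obstacle here is tuning the parameters (girth of the source, lengths of the added paths, the diameter target~$4$) so that both the $K_{1,\ell}^r$-freeness and the diameter bound hold simultaneously; this is bookkeeping rather than a conceptual difficulty. Finally, Case~4 follows immediately from Theorem~\ref{t-claw}: claw-free graphs are in particular $K_{1,r}^\ell$-free for all $r\geq 3$ and $\ell\geq 1$ since a claw-free graph contains no induced $K_{1,3}$ and hence no supergraph of it containing an induced claw; taking a claw-free instance of \textsc{$k$-Colouring} for $k\geq 4$ and adding, if necessary, a few pendant vertices to bring the diameter to at most~$2$ without introducing a claw (pendants attached to a dominating vertex) yields the reduction.
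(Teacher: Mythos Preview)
Your proposal has genuine gaps in Cases~1, 3, and~4.

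For Case~1, the structural lemma you rely on is false: $K_{3,3}$ is chair-free and has diameter~$2$, yet every vertex is a claw centre, so the claw centres do not form an independent set and Theorem~\ref{t-extended0} cannot be invoked. Your fallback for $d>2$ is also wrong: the windmill graph (many triangles glued at a common vertex) is chair-free, $3$-colourable, has diameter~$2$, and can be made arbitrarily large, so chair-free $3$-colourable graphs of bounded diameter need not have bounded size. The paper's argument is quite different. It finds an induced cycle $C$ of length $p\leq 2d+1$, layers the graph into distance classes $N_0=V(C),N_1,N_2,\ldots$ from $C$, and uses chair-freeness together with $K_4$-freeness to show that the number of vertices \emph{outside} $N_1\cup N_2$ is bounded by a function of $d$ alone (the sets $N_1,N_2$ themselves may be arbitrarily large). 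After branching over all colourings of $C$ and of those boundedly many outer vertices, every remaining vertex has a list of size at most~$2$, and Theorem~\ref{t-2sat} finishes. Your sketch for Case~2 has the same ``bounded size'' misconception, though the overall strategy (handle $C_5$-free via Lemma~\ref{c5}, otherwise layer from a $C_5$ and use Ramsey) is the right one; what gets bounded is only the set of vertices whose list stays at size~$3$ after propagation, not the whole graph.

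For Case~3, reducing from \textsc{$3$-Colouring} on graphs of large girth cannot work. In a graph of girth at least~$9$ and minimum degree at least~$2$, any vertex of degree at least~$4$ is already the centre of an induced $K_{1,4}^3$: its four neighbours are pairwise non-adjacent, and the girth forbids any chord on a length-$3$ path hanging off one of them. Since \textsc{$3$-Colouring} is trivial on subcubic graphs by Brooks' theorem, you cannot avoid degree-$4$ vertices in the source, so the source instance itself fails to be $K_{1,4}^3$-free before you add any gadgets. The paper instead reduces from NAE \textsc{$3$-SAT} with bounded literal occurrences, building a graph with a single hub vertex~$z$ adjacent to all literal vertices; $z$ is the only vertex with four independent neighbours, and because every literal vertex is adjacent to~$z$, no long induced path can emanate from~$z$, so no $K_{1,4}^3$ appears. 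For Case~4, the pendant idea is the wrong gadget: a pendant attached to a vertex with two non-adjacent neighbours creates a claw, and if a dominating vertex already exists the diameter is at most~$2$ anyway. The paper's move is to add a \emph{new} dominating vertex to a claw-free graph~$G$; the result is $K_{1,3}^1$-free (any chair would require a vertex non-adjacent to the dominating vertex), has diameter~$2$, and $G$ is $(k{-}1)$-colourable if and only if the new graph is $k$-colourable. This shift by one colour is exactly why the statement needs $k\geq 4$.
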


\begin{proof}
{\bf 1.} Recall that $K_{1,3}^1$ is the chair $S_{1,1,2}$. 
Let $(G,L)$ be an instance of {\sc List $3$-Colouring}, where $G$ is a chair-free graph of diameter~$d$ for some $d\geq 2$.

First suppose that $G$ is a tree. We consider a leaf $u$. If $L(u)=\emptyset$, then $(G,L)$ is a no-instance.
 If $|L(u)|=1$, then we assign $u$ the unique colour of $L(u)$ and remove that colour from the list of the parent of $u$. 
 If $|L(u)|\geq 2$, then $(G,L)$ is a yes-instance if and only if $(G',L')$ is a yes-instance, where $G'=G-u$ and $L'$ is the restriction of $L$ to $V(G)\setminus \{u\}$.
Hence, we can determine in polynomial time if $G$ has a colouring that respects $L$. 
From now on assume that $G$ is not a tree.

We check in $O(n^4)$ time if $G$ has a $K_4$. If so, then $G$ is not $3$-colourable and thus $G$ has no colouring respecting $L$.
From now on we assume that $G$ is not a tree and that $G$ is $K_4$-free. As $G$ is not a tree and $G$ is connected, $G$ contains an induced cycle of length at most $2d+1$ by Lemma~\ref{l-cycle}. We can find a largest induced cycle~$C$ of length at most $2d+1$ in $O(n^{2d+1})$ time.
Let $|V(C)|=p$.
We write $N_0=V(C)=\{x_1,x_2,\ldots,x_p\}$ and for $i\geq 1$, $N_i=N(N_{i-1})\setminus N_{i-2}$. So the sets $N_i$ partition $V(G)$, and the distance of a vertex $u\in N_i$ to $N_0$ is $i$. 

\begin{figure} [h]
	\resizebox{10cm}{!} {
		\begin{tikzpicture}[main_node/.style={circle,draw,minimum size=1cm,inner sep=3pt]}]
			\node[main_node](x1) at (0,0){$x_1$};
			\node[main_node](x2) at (2,0){$x_2$};
			\node[main_node](x3) at (4,0){$x_3$};
			\node[main_node](x4) at (6,0){$x_4$};
			\node[main_node](x5) at (8,0){$x_5$};
			
			\draw (x1)--(x2)--(x3)--(x4)--(x5);
			\draw (x1) to[out=30, in=150] (x5);
			
			\draw[thick, dash dot] (-4,-1)--(10,-1);
			
			\node[main_node](y1) at (0,-2){y};
			
			\draw(x1)--(y1);
			\draw(x2)--(y1);
			
			\draw[thick, dash dot] (-4,-3)--(10,-3);
			
			\node[main_node](z1) at (0,-4){z};
			
			\draw(y1)--(z1);
			
			\draw[thick, dash dot] (-4,-5)--(10,-5);
			
			\node[main_node](v1) at (-1,-6){$w_1$};
			\node[main_node](v2) at (1,-6){$w_2$};
			
			\draw(v1)--(z1);
			\draw(v2)--(z1);
			\draw(v1)--(v2);
			
			\draw[thick, dash dot] (-2,2)--(-2,-8);
			
			\node(n0) at (-3,0){$N_0$};
			\node(n1) at (-3,-2){$N_1$};
			\node(n2) at (-3,-4){$N_2$};
			\node(n3) at (-3,-6){$N_3$};
		\end{tikzpicture}
}
		\caption{An example of a decomposition of a chair-free graph of diameter~3 into sets $N_0,\ldots, N_3$ where $p=5$ and $y\in N_1$ has two ``descendants'' in $N_3$. To prevent an induced chair, $y$ must be adjacent to exactly two (adjacent) vertices of $N_0$, and $w_1$ and $w_2$ must be adjacent to each other.}\label{chair} 
	\end{figure}
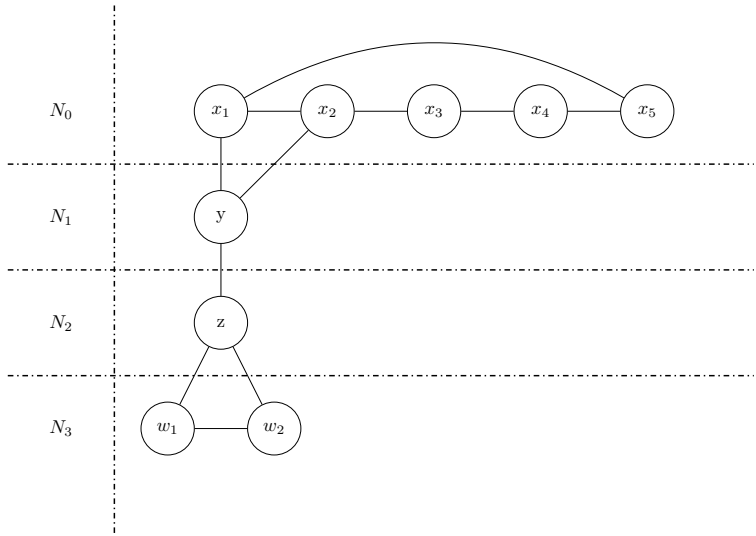
	
\medskip
\noindent
{\bf Case 1.} $4\leq p\leq 2d+1$. \\
This case is illustrated in Figure~\ref{chair}. We consider every possible $3$-colouring of $C$ that respects the restriction of $L$ to $V(C)$. Let $c$ be such a $3$-colouring.
Every vertex with two differently coloured neighbours can only be coloured with one remaining colour. We assign this unique colour to such a vertex and apply this rule as long as possible. 
This takes polynomial time. 
We discard $c$ as soon as the list of a vertex is empty. Otherwise, the remaining vertices have a list of admissible colours that either consists of two or three colours, and vertices in the latter case belong to $V(G)\setminus (N_0\cup N_1)$ (as $N(N_0)=N_1$). 

If $N_2=\emptyset$, then $V(G)=N_0\cup N_1$. Then, we obtained an instance of {\sc $2$-List Colouring}, which we can solve in linear time due to Theorem~\ref{t-2sat}. Now assume that $N_2\neq \emptyset$.
Let $z\in N_2$. Then $z$ has a neighbour $y\in N_1$, which in turn has a neighbour $x\in N_0$.
 If $y$ is adjacent to neither neighbour of $x$ on $N_0$, then $z$, $y$, $x$ and these two neighbours induce a chair in $G$, a contradiction.  Hence, $y$ must be adjacent to at least one neighbour of $x$ on $N_0$, meaning that $y$ must have received a colour by our algorithm. Consequently, $z$ must have a list of admissible colours of size at most~$2$.
 
From the above we deduce that every vertex in $N_2$ has only two available colours in its list. We now consider the vertices of $N_3$.
Let $z'\in N_3$. Then $z'$ has a neighbour $z\in N_2$, which in turn has a neighbour $y\in N_1$, which in turn has a neighbour $x\in N_0$, say $x=x_1$. If $y$ has two non-adjacent neighbours in $N_0$, then $z',z,y$ and these two non-adjacent neighbours of $y$ induce a chair in $G$, a contradiction. Combined with the fact deduced above, we conclude that $y$ must have exactly two neighbours in $N_0$ and these two neighbours must be adjacent, say $x_2$ is the other neighbour of $y$ in $N_0$.

Suppose $x_1$ and $x_2$ are both adjacent to a vertex $y'\in N_1\setminus \{y\}$ that is adjacent to a vertex in $N_2$ that has a neighbour in $N_3$. Then, just as in the case of vertex~$y$, the two vertices $x_1$ and $x_2$ are the only two neighbours of $y'$ in $N_0$. If $y$ and $y'$ are not adjacent, this means that $x_2,x_3,x_4,y,y'$ induce a chair in $G$, a contradiction. Hence $y$ and $y'$ must be adjacent. However, then $x_1,x_2,y,y'$ form a $K_4$, a contradiction. This means that every pair of adjacent vertices of $N_0$ can have at most one common neighbour in $N_1$ that is adjacent to a vertex in $N_2$ with a neighbour in $N_3$.
We already deduced that every vertex of $N_1$ with a ``descendant'' in $N_3$ has exactly two neighbours in $N_0$, which are adjacent. Hence, we conclude that the number of such vertices of $N_1$ is at most $p$. 
 
We now observe that for $i\geq 2$, every vertex in $N_i$ has at most two neighbours in $N_{i+1}$. This can be seen as follows. If $v\in N_i$ has two non-adjacent neighbours $w_1,w_2$ in $N_{i+1}$, then we pick a neighbour $u$ of $v$ in $N_{i-1}$, which has a neighbour~$t$ in $N_{i-2}$. Then $v,u,t,w_1,w_2$ induce a chair in $G$, a contradiction. Hence , the neighbourhood of every vertex in $N_i$ in $N_{i+1}$ is a clique, which must have size at most~$2$ due to the $K_4$-freeness of $G$.
As the number of vertices in $N_1$ with a ``descendant'' in $N_3$ is at most $p$, this means that there are at most $2^{i-1}p$ vertices in $N_i$ with a neighbour in $N_{i+1}$. Therefore the total number of vertices not belonging to any of the sets $N_0, N_1$ or $N_2$ is at most $\sum_{i=3}^{d} 2^{i-1}p.$
This means the total number of vertices not belonging to $N_1$ or $N_2$ is at most $$\beta(d)\,=\, \sum_{i=3}^{d}{ 2^{i-1}p}+p\, \leq\,
\sum_{i=3}^{d}{ 2^{i-1}(2d+1)}+2d+1.$$ 
Let $T_c$ be the set consisting of these vertices. We consider every possible $3$-colouring of $G[T_c]$ that respects the restriction of $L$ to $T_c$. As we already deduced that the vertices in $N_1\cup N_2$ have a list of size at most~2, for each case we obtain an instance of $2$-{\sc List Colouring}, which we can solve in linear time due to Theorem~\ref{t-2sat}. As the total number of instances we need to consider is at most $3^p \cdot 3^{\beta(d)}\leq 3^{2d+1}\cdot 3^{\beta(d)}$, our algorithm runs in polynomial time.

\medskip
\noindent
{\bf Case 2.} $p=3$.\\
As $p$ was the size of a largest induced cycle of length at most~$2d+1$ and $2d+1\geq 5$, we find that $G$ is $C_4$-free.
As $G$ is $K_4$-free, each vertex of $N_1$ is adjacent to at most two vertices of~$N_0$.
If a vertex $x\in N_0$ has two independent private neighbours~$u$ and~$v$ in $N_1$ with respect to $N_0$, then every neighbour~$w$ of $u$ in  $N_2$ must also be a neighbour of $v$ and vice versa, since $G$ is chair-free. However, this is not possible, as $x,u,w,v$ induce a $C_4$. We conclude that $u$ and $v$ must be adjacent. Therefore, as $G$ is $K_4$-free, every vertex of $N_0$ has at most two private neighbours in $N_1$, with respect to $N_0$, that have a neighbour in $N_2$.

By the same arguments as above we deduce that every two vertices of $N_0$ have at most one common neighbour in $N_1$ that is adjacent to a vertex in $N_2$. Combined with the above, we find that there at most 
 $6+3=9$ vertices in $N_1$ that have a neighbour in $N_2$. If a vertex in $N_1$ has two independent neighbours in $N_2$, then $G$ contains an induced chair, which is not possible. 
Hence the neighbourhood of a vertex in $N_1$ in $N_2$ is a clique, which has size at most~$2$ due to the $K_4$-freeness of $G$.
We conclude that  $|N_2|\leq 9\times 2=18$. 
Similarly, every vertex in $N_i$ for $i \geq 3$ has at most two neighbours in $N_{i+1}$. Therefore the number of vertices in $N_i$ for $i \geq 3$ is at most 
$18 \times 2^{i-2}$. This means that the total number of vertices outside $N_0 \cup N_1 \cup N_2$ is at most $$\beta(d)=\sum_{i=3}^{d} 18 \times 2^{i-2}.$$ 
Let $T$ be the set consisting of these vertices. We consider every possible $3$-colouring of $G[V(C)\cup T]$ that respects the restriction of $L$ to $V(C)\cup T$. For each case we obtain an instance of $2$-{\sc List Colouring}, which we can solve in linear time due to Theorem~\ref{t-2sat}. As the total number of instances we need to consider is at most $3^3 \times 3^{\beta(d)}$, our algorithm runs in polynomial time.

\medskip
\noindent
{\bf 2.} Let $(G,L)$ be an instance of {\sc List $3$-Colouring}, where $G$ is a $K_{1,r}^2$-free graph of diameter at most~$2$ for some $r\geq 3$. We first check in $O(n^4)$ time if $G$ is $K_4$-free. If not, then $G$ is not $3$-colourable, and thus $(G,L)$ is a no-instance. We then check in $O(n^5)$ time if $G$ has an induced $C_5$.
If $G$ is $C_5$-free, then we use Lemma~\ref{c5}. 
From now on, suppose that $G$ is $K_4$-free and that $G$ contains an induced cycle $C$ of length~$5$, say on vertices $x_1,\ldots,x_5$ in that order.
We write $N_0=V(C)=\{x_1,\ldots,x_5\}$, $N_1=N(V(C))$ and $N_2=V(G)\setminus (N_0\cup N_1)$.

Let $N_2'$ be the set of vertices in $N_2$ that are adjacent to some vertex in $N_1$ that is a private neighbour of some vertex in $N_0$ with respect to $N_0$.  
As $G$ is $K_4$-free, the private neighbourhood $P(x_i)$ of each vertex $x_i\in N_0$ with respect to $N_0$ does not contain a clique of size $3$. Moreover, if $P(x_i)$ contains an independent set $I$ of size $r-1$ for some $i\in \{1,\ldots,5\}$, then $I\cup \{x_i,x_{i+1},x_{i+2},x_{i+3}\}$ induces a $K_{1,r}^2$, which is not possible. Now let $v\in P(x_i)$ for some $i\in \{1,\ldots 5\}$, say $i=1$. As $G$ is $K_4$-free, the set $N(v)\cap N_2$ does not contain a clique of size $3$. Moreover, if $N(v)\cap N_2$ contains an independent set $I'$ of size $r-1$, then $I'\cup \{v,x_1,x_2,x_3,\}$ induces a $K_{1,r}^2$, which is not possible. 
 Hence, $|N(v)\cap N_2|\leq  R(3,r-1)$ by Ramsey's Theorem. We conclude that 
 $$|N_2'|\leq 5R(3,r-1)^2.$$
We now consider all possible $3$-colourings of $C$ that respect the restriction of $L$ to $V(C)$. Let $c$ be such a $3$-colouring. We assume without loss of generality that $c(x_1)=c(x_3)=1$, $c(x_2)=c(x_4)=2$ and $c(x_5)=3$. 
Moreover, every vertex that has two differently coloured neighbours can only be coloured with one remaining colour. We assign this unique colour to such a vertex and apply this rule as long as possible. This takes polynomial time. 
We discard~$c$ as soon as the list of a vertex is empty.
The remaining vertices have a list of admissible colours that either consists of two or three colours, and vertices in the latter case must belong to $N_2$ (as $N(N_0)=N_1$).

Let $T_c$ be the set of vertices in $N_2$ that still have a list of size~$3$. We will prove that $T_c\subseteq N_2'$. 
Let $u\in T_c$. As $G$ has diameter~$2$, we find that $u$ has a neighbour $v$ adjacent to $x_5$. Then $v$ cannot be adjacent to any of $x_1,\ldots,x_4$, as otherwise $v$ would have a unique colour and $u$ would not be in $T_c$. Hence, $v$ is a private neighbour of $x_5$ with respect to $N_0$. We conclude that all vertices in $T_c$ belong to $N_2'$, which implies that $|T_c|\leq |N_2'|\leq 5R(3,r-1)^2$.

We now consider every possible $3$-colouring of $G[T_c]$ that respects the restriction of $L$ to~$T_c$. Then all uncoloured vertices have a list of size at most~$2$. In other words, we created an instance of {\sc $2$-List Colouring}, which we solve in linear time using Theorem~\ref{t-2sat}. As the number of $3$-colourings of $C$ is at most $3^5$ and for each $3$-colouring $c$ of $C$ the number of $3$-colourings of $G[T_c]$ is at most
$3^{5R(3,r-1)^2}$, the total running time of our algorithm is polynomial.

\medskip
\noindent
{\bf 3.} We consider the standard reduction from the \NP-complete problem NAE $3$-SAT~\cite{Sc78}, where each variable appears in at most three clauses and each literal appears in at most two. Given a CNF formula $\phi$, we construct the graph $G$ as follows (see also Figure~\ref{f-figgg}):

\begin{itemize} 
    \item Add a literal vertex $v_i$ for each positive literal $x_i$ and a literal vertex $v_i'$ for its negation.
    \item Add an edge between each literal vertex and its negation.
    \item Add a vertex $z$ adjacent to every literal vertex.
    \item For each clause $C_i$ add a triangle $T_i$ with vertices $c_{i_1}, c_{i_2}, c_{i_3}$.
    \item Fix an arbitrary order of the literals of $C_i$, $x_{i_1}, x_{i_2}, x_{i_3}$ and add an edge $x_{i_j}c_{i_j}$.
\end{itemize}

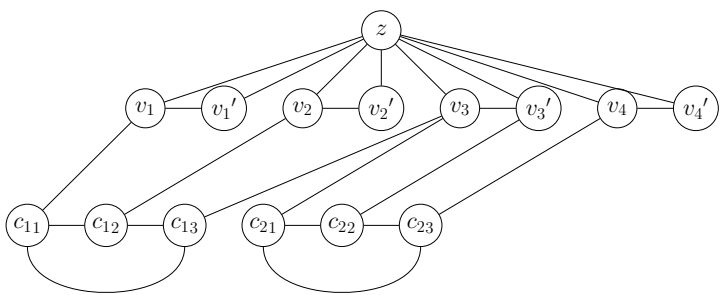
\begin{figure}[h]
\resizebox{9.5cm}{!}{
\begin{tikzpicture}[main_node/.style={circle,draw,minimum size=1cm,inner sep=3pt]}]
\node[main_node](z) at (2,0){\LARGE$z$};
\node[main_node](v_1) at (-4,-2){\LARGE$v_1$};
\node[main_node](v1') at (-2,-2){\LARGE${v_1}^{\prime}$};
\node[main_node](v_2) at (0,-2){\LARGE$v_2$};
\node[main_node](v_2') at (2,-2){\LARGE${v_2}^{\prime}$};
\node[main_node](v_3) at (4,-2) {\LARGE$v_3$};
\node[main_node](v_3') at (6,-2) {\LARGE${v_3}^{\prime}$};
\node[main_node](v4) at (8,-2) {\LARGE$v_4$};
\node[main_node](v4') at (10,-2){\LARGE${v_4}^{\prime}$};
\node[main_node](c11) at (-5,-5) {\LARGE${c_1}_2$};
\node[main_node](c12) at (-7, -5) {\LARGE${c_1}_1$};
\node[main_node](c13) at (-3,-5) {\LARGE${c_1}_3$};
\node[main_node](c21) at (1,-5) {\LARGE${c_2}_2$};
\node[main_node](c22) at (-1,-5) {\LARGE${c_2}_1$};
\node[main_node](c23) at (3,-5) {\LARGE${c_2}_3$};
\draw(z)--(v4);
\draw(z)--(v4');
\draw(z)--(v_1);
\draw(z)--(v1');
\draw(z)--(v_2);
\draw(z)--(v_2');
\draw(z)--(v_3);
\draw(z)--(v_3');
\draw(v_1)--(c12);
\draw(v_2)--(c11);
\draw(v_3)--(c22);
\draw(v_3)--(c13);
\draw(v_3')--(c21);
\draw(v4)--(c23);
\draw[](c11)--(c12);
\draw[](c11)--(c13);
\draw[](c12) to [out=270, in=270](c13);
\draw[](c21)--(c22);
\draw[](c21)--(c23);
\draw[](c22)to [out=270, in=270](c23);
\draw[](v_1)--(v1');
\draw[](v_2)--(v_2');
\draw[](v_3)--(v_3');
\draw[](v4)--(v4');
\end{tikzpicture}
}
\caption{An example of a graph $G$ in the reduction from NAE $3$-SAT to {\sc $3$-Colouring} with clauses $C_1=x_1 \lor x_2 \lor x_3$ and $C_2=x_3\lor \neg x_3 \lor x_4$.}\label{f-figgg}
\end{figure}

For the sake of completeness we give the arguments for the known reduction.
Given a $3$-colouring of $G$, assume $z$ is assigned colour $1$. Then each literal vertex is assigned either colour $2$ or colour $3$. If, for some clause $C_i$, the vertices $x_{i_1}, x_{i_2}$ and $x_{i,3}$ are all assigned the same colour, then $T_i$ cannot be coloured. Therefore, if we set literals whose vertices are coloured with colour $2$ to be true and those coloured with colour $3$ to be false, each clause must contain at least one true literal and at least one false literal.

If $\phi$ is satisfiable then we can colour vertex $z$ with colour $1$, each true literal with colour $2$ and each false literal with colour $3$. Then, since each clause has at least one true literal and at least one false literal, each triangle has neighbours in two different colours. This implies that each triangle is $3$-colourable. Therefore $G$ is $3$-colourable if and only if $\phi$ is satisfiable.

We next show that $G$ has diameter at most $4$. First note that any literal vertex is adjacent to $z$ and any clause vertex is adjacent to some literal vertex so any vertex is at distance at most $2$ from $z$. Therefore any two vertices are at distance at most $4$.

Finally we show that $G$ is $K_{1,4}^3$-free. Any literal vertex has degree at most $4$ since it appears in at most two clauses. However it has at most three independent neighbours since its negation is adjacent to $z$. Each clause vertex has at most three neighbours so the only vertex with four independent neighbours is $d$. The longest induced path including $z$ has length at most~$4$ since any such path contains at most one literal vertex and at most two vertices of any triangle. Therefore $G$ is $K_{1,4}^3$-free.

\medskip
\noindent
{\bf 4.} This follows from Theorem~\ref{t-claw}. Let $k^*\geq 3$. We take a claw-free graph $G$ and add a dominating vertex to it.
The new graph $G'$ has diameter at most~$2$ and is $K_{1,3}^1$-free. Let $k=k^*+1\geq 4$. Then $G$ is $k^*$-colourable if and only if $G'$ is $k$-colourable.
\end{proof}

Subdividing two edges of the claw yields another interesting case, namely where  $H=S_{1,2,2}$.
For $k\geq 4$, Theorem~\ref{s112} tells us that {\sc $k$-Colouring} is \NP-complete for $S_{1,2,2}$-free graphs of diameter~2.
For $k=3$, we could only prove polynomial-time solvability if $d=2$. 

\begin{theorem}\label{s122}
{\sc List $3$-Colouring} can be solved in polynomial time for $S_{1,2,2}$-free graphs of diameter at most~$2$.
\end{theorem}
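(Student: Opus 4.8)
The plan is to follow the same template used in the proofs of Theorem~\ref{t-polynomial}, Theorem~\ref{t-extended0}, and Theorem~\ref{s112}(1),(2): reduce a given instance $(G,L)$ of {\sc List $3$-Colouring} in polynomial time to a polynomial number of instances of {\sc $2$-List Colouring}, which we can then solve in linear time by Theorem~\ref{t-2sat} (Edwards). Throughout we may assume $G$ has diameter exactly~$2$ (if $G$ is complete, handle the trivial case separately), that $G$ is $K_4$-free (checked in $O(n^4)$ time; otherwise it is a no-instance), and, using Lemma~\ref{c5}, that $G$ contains an induced $C_5$, say $C$ on vertices $x_1,\dots,x_5$ in order. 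As in Theorem~\ref{s112}(2), write $N_0=V(C)$, $N_1=N(V(C))$, and $N_2=V(G)\setminus(N_0\cup N_1)$; since $G$ has diameter~$2$, every vertex of $N_2$ has a neighbour in $N_1$, and $N(N_0)=N_1$.

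First I would branch over all at most $3^5$ colourings $c$ of $C$ respecting $L|_{V(C)}$, and for each, propagate: any vertex with two differently coloured neighbours gets its forced colour, iterate, discard $c$ on an empty list. After propagation every vertex has a list of size $\le 3$, and a vertex with a size-$3$ list must lie in $N_2$ and, moreover, its neighbours in $N_1$ must all be ``private'' (adjacent to a unique vertex of $C$), exactly as argued for $S_{1,1,2}$. The crux is then to bound the number of vertices of $N_2$ that can still carry a size-$3$ list; call this set $T_c$. If $|T_c|$ is bounded by a constant (or polynomial), we branch over all $3$-colourings of $G[T_c]$ respecting $L|_{T_c}$, obtain a {\sc $2$-List Colouring} instance, and finish via Theorem~\ref{t-2sat}.

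The main obstacle, and the heart of the argument, is the structural analysis showing $|T_c|$ is bounded. The key tool is $S_{1,2,2}$-freeness: an induced $S_{1,2,2}$ is a claw-centre $y$ with three pendant paths of lengths $1,2,2$. Suppose $u\in T_c$. Then $u$ has a private-type neighbour $v\in N_1$ adjacent to exactly one $x_i$, say $x_1$ (diameter~$2$ forces $u$ to have a neighbour near each region of $C$). I would examine the local structure around $x_1$: its private neighbourhood $P(x_1)$ in $N_1$ cannot contain a large independent set, because $x_1$ together with the two length-$2$ arcs of $C$ leaving $x_1$ (e.g.\ $x_1x_2x_3$ and $x_1x_5x_4$) already provides two of the three arms of an $S_{1,2,2}$ centred at $x_1$; hence any independent set in $P(x_1)$ of size $\ge 2$ would complete a forbidden $S_{1,2,2}$ (using $x_1$ as centre, one pendant of length~$1$ into $P(x_1)$, and the two $C$-arcs). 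Combined with $K_4$-freeness (no triangle in $P(x_1)$), Ramsey's Theorem bounds $|P(x_1)|$ by a constant. Next, for a fixed $v\in P(x_1)$, I would bound $|N(v)\cap N_2|$: using $v$ as centre of a potential $S_{1,2,2}$ with arms $v x_1$, $v x_1 x_2$... — here one must be slightly careful since $x_1x_2$ is an edge, so instead use arms $x_1 x_5$... Let me restructure: take $v$ adjacent to $x_1$ only; the path $v x_1 x_2 x_3$ has length~$3$, and $v x_1 x_5 x_4$ has length~$3$, so $v$ together with these gives an $S_{1,3,3}$-like configuration — this does not directly forbid structure via $S_{1,2,2}$. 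So the correct centre to use for controlling $N(v)\cap N_2$ is a neighbour of $v$: if $v$ has two independent neighbours $w_1,w_2\in N_2$, then considering $v$ as centre with pendant paths $v w_1$, $v w_2$, and $v x_1 x_2$ (length~$2$), we would need $w_1,w_2$ nonadjacent to each other and to $x_1,x_2$; this is an induced $S_{1,2,2}$ unless forbidden — so $N(v)\cap N_2$ has no independent set of size~$2$ avoiding such interference, and with $K_4$-freeness, $|N(v)\cap N_2|$ is bounded by a Ramsey constant $R(3,c)$. Putting these together, $|T_c|\le 5\cdot|P(x_1)|\cdot R(3,\cdot)$ is a constant depending only on absolute constants, so the total number of {\sc $2$-List Colouring} instances is $O(3^5\cdot 3^{|T_c|})=O(1)$, and each is solved in linear time by Theorem~\ref{t-2sat}; hence the overall running time is polynomial. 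The delicate point I expect to need the most care is getting the induced-subgraph configurations exactly right (ensuring non-edges where needed so that the forbidden graph really is an induced $S_{1,2,2}$ and not some supergraph), and handling vertices of $N_2$ that reach $C$ only through non-private vertices of $N_1$ — but those are exactly the vertices already pinned to lists of size $\le 2$ by propagation, so they cause no difficulty.
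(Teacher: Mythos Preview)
Your overall framework (branch over colourings of an induced $C_5$, using Lemma~\ref{c5} for the $C_5$-free case, then propagate and reduce to {\sc $2$-List Colouring}) matches the paper, but your structural analysis of $T_c$ does not go through. The configurations you build are not induced copies of $S_{1,2,2}$. For bounding $|P(x_1)|$ you take the two arcs $x_1x_2x_3$ and $x_1x_5x_4$ as the two length-$2$ arms of an $S_{1,2,2}$ centred at $x_1$, but $x_3x_4\in E(C)$, so $\{x_1,\dots,x_5,p\}$ induces a $C_5$ with a pendant, not an $S_{1,2,2}$. For bounding $|N(v)\cap N_2|$ you use $\{v,w_1,w_2,x_1,x_2\}$, which has only five vertices and is the chair $S_{1,1,2}$; $S_{1,2,2}$-free graphs can certainly contain induced chairs, so this configuration forbids nothing. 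Neither Ramsey bound is justified, and your side remark that a vertex of $T_c$ must have only ``private'' $N_1$-neighbours is also not quite right (such a neighbour could see two non-consecutive, same-coloured vertices of $C$).

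The paper avoids all of this by proving that in fact $T_c=\emptyset$: every $u\in N_2$ has a neighbour $v\in N_1$ adjacent to two \emph{consecutive} vertices of $C$, so $v$ is forced and $u$'s list drops to size at most~$2$. The argument is a two-step case analysis on an arbitrary neighbour $v\in N_1$ of $u$, say $v\sim x_1$. If $v$ avoids $x_2,x_5$ and also $x_3$, then $\{x_1,x_5,x_2,x_3,v,u\}$ induces an $S_{1,2,2}$ with centre $x_1$ and arms $x_1x_5$, $x_1x_2x_3$, $x_1vu$ (only \emph{one} arc of $C$ is used, so the edge $x_3x_4$ never enters the picture --- this is exactly the point your argument missed). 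If instead $v\sim x_3$ but $v\not\sim x_4$, then $\{x_3,x_2,x_4,x_5,v,u\}$ induces an $S_{1,2,2}$ with centre $x_3$. Either way we get a contradiction, so $v$ must see two consecutive $C$-vertices. Hence after the $3^5$ branches on colourings of $C$ and one round of propagation, every vertex already has a list of size at most~$2$; no secondary branching on $T_c$ (and no $K_4$-check) is needed.
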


\begin{proof}
Let $(G,L)$ be an instance of {\sc List $3$-Colouring}, where $G$ is an $S_{1,2,2}$-free graph of diameter at most~$2$.
We first check in $O(n^5)$ time if $G$ has an induced $C_5$.
If $G$ is $C_5$-free, then we use Lemma~\ref{c5}. Suppose $G$ contains an induced cycle $C$ of length~$5$, say on vertices $x_1,\ldots,x_5$ in that order.
We write $N_0=V(C)=\{x_1,\ldots,x_5\}$, $N_1=N(V(C))$ and $N_2=V(G)\setminus (N_0\cup N_1)$.
As $G$ has diameter~$2$, for every $i\in \{1,2,3\}$,  every vertex in $N_2$ has a neighbour in $N_1$ that is adjacent to $x_i$. 

We let $T$ consist of all vertices of $N_2$ that have a neighbour in $N_1$ that is adjacent to two adjacent vertices of $N_0$. 
So after colouring the five vertices of $N_0$, there are at most two option left for colouring a vertex of $T$.
We claim that $N_2=T$. In order to see this, let $u\in N_2$. As $G$ has diameter~2, we find that $u$ must have a neighbour $v\in N_1$ adjacent to a vertex of $N_0$, say $x_1$. Then $v$ is not adjacent to $x_5$ or $x_2$. If $v$ is not adjacent to $x_3$ either, then the vertices
$x_1,x_5,x_2,x_3,v,u$ induce a $S_{1,2,2}$ with centre~$x_1$, a contradiction. So $v$ must be adjacent to $x_3$, meaning $v$ is not adjacent to $x_4$. However, now $x_3,x_2,x_4,x_5,v,u$ induce a $S_{1,2,2}$ with centre~$x_3$, another contradiction.

We now consider every possible $3$-colouring of $C$  that respects the restriction of $L$ to $V(C)$. We observe that every vertex of $N_1$ can only be coloured with at most two possible colours and that after propagation, every uncoloured vertex of $N_2$ can only be coloured with two possible colours as well (as $T=N_2$). Then it remains to solve
an instance of {\sc $2$-List Colouring}, which takes linear time by Theorem~\ref{t-2sat}. As we need to do this at most $3^5$ times, the total running time of our algorithm is polynomial.
\end{proof}

\section{Conclusions}\label{s-con}

We proved a number of new results for {\sc Colouring} and {\sc $k$-Colouring} for polyad-free graphs of bounded diameter and for graphs of bounded diameter and girth. In particular we identified and narrowed a number of complexity gaps. This leads us to some natural open problems.
Open Problems~1 and~2 follow from Theorem~\ref{t-col}. Open Problems~3 and~4 come from Theorem~\ref{s112}. We note that $K_{1,3}^2=S_{1,1,3}$. 
We also note that the gadget Mertzios and Spirakis~\cite{MS16} used in the proof of Theorem~\ref{t-ms16} contains induced polyads with an arbitrarily large diameter and an arbitrarily large number of leaves.
So in order to reduce the diameter in Theorem~\ref{s112}:3 from $d=4$ to $d=3$ (if possible) one must find an alternative \NP-hardness proof for $3$-{\sc Colouring} restricted to graphs of diameter~$3$.
Open Problem~5 stems from Theorem~\ref{s122}.
Recall that determining the complexity of $3$-{\sc Colouring} for graphs of diameter~2 is still wide open. This question is covered by Open Problem~6.
Open Problems~7 and~8 stem from Theorem~\ref{t-polynomial}. Finally, Open Problem~9 is closely related to Theorem~\ref{t-ms16} and Open Problem~2, as triangle-free graphs form a subclass of locally claw-free graphs.

\begin{oproblem}
Does there exist an integer~$d$ such that {\sc Colouring} is \NP-complete for $K_{1,3}$-free graphs of diameter~$d$?
\end{oproblem}

\begin{oproblem}
What is the complexity of {\sc Colouring} for $C_3$-free graphs of diameter~$2$, or equivalently, graphs of diameter~$2$ and girth at least~$4$?
\end{oproblem}

\begin{oproblem}
What are the complexities of {\sc $3$-Colouring} for $K_{1,4}^1$-free graphs of diameter~$3$ and for $K_{1,3}^2$-free graphs of diameter~$3$?
\end{oproblem}

\begin{oproblem}
Does there exist a polyad $S$ such that {\sc $3$-Colouring} is \NP-complete for $S$-free graphs of diameter~$3$?
\end{oproblem}

\begin{oproblem}
Do there exist integers $d,h,i,j$ such that $3$-{\sc Colouring} is \NP-complete for $S_{h,i,j}$-free graphs of diameter~$d$?
\end{oproblem}

\begin{oproblem}
What is the complexity of the open cases in Table~\ref{t-table} and in particular of $3$-{\sc Colouring} for graphs of diameter~$2$ and for
$C_3$-free graphs of diameter~$2$?
\end{oproblem}

\begin{oproblem}
What is the complexity of {\sc $3$-Colouring} for quasi-claw-free graphs of diameter at most~$3$?
\end{oproblem}

\begin{oproblem}
Does there exists an integer $d$ such that {\sc $3$-Colouring} is \NP-complete for quasi-claw-free graphs of diameter at most~$d$?
\end{oproblem}

\begin{oproblem}
What is the complexity of {\sc $3$-Colouring} for locally claw-free graphs of diameter at most~$2$?
\end{oproblem}

\end{document}